\newfont{\bb}{msbm10 at 12pt}
\newfont{\tbb}{msbm10 at 8pt}
\numberwithin{equation} {subsection}
\begin{document}
\mbox{}\vspace{0.2cm}\mbox{}

\providecommand{\keywords}[1]
{
  \small	
  \textbf{\textit{Keywords---}} #1
}

\

\theoremstyle{plain}\newtheorem{lemma}{Lemma}[subsection]
\theoremstyle{plain}\newtheorem{proposition}{Proposition}[subsection]
\theoremstyle{plain}\newtheorem{theorem}{Theorem}[subsection]

\theoremstyle{plain}\newtheorem*{theorem*}{Theorem}
\theoremstyle{plain}\newtheorem*{main theorem}{Main Theorem} 
\theoremstyle{plain}\newtheorem*{lemma*}{Lemma}
\theoremstyle{plain}\newtheorem*{claim}{Claim}

\theoremstyle{plain}\newtheorem{example}{Example}[subsection]
\theoremstyle{plain}\newtheorem{remark}{Remark}[subsection]
\theoremstyle{plain}\newtheorem{corollary}{Corollary}[subsection]
\theoremstyle{plain}\newtheorem*{corollary-A}{Corollary}
\theoremstyle{plain}\newtheorem{definition}{Definition}[subsection]
\theoremstyle{plain}\newtheorem{acknowledge}{Acknowledgment}
\theoremstyle{plain}\newtheorem{conjecture}{Conjecture}

\begin{center}
\rule{15cm}{1.5pt} \vspace{.4cm}

{\bf\Large $p$-Laplace equations in conformal geometry} 
\vskip .3cm

Huajie Liu, Shiguang Ma, Jie Qing, and Shuhui Zhong

\vspace{0.3cm} 
\rule{15cm}{1.5pt}
\end{center}


\title{}

\begin{abstract} In this paper we introduce the $p$-Laplace equations for the intermediate Schouten curvature
in conformal geometry. These $p$-Laplace equations provide more tools for the 
study of geometry and topology of manifolds. First, the positivity of the intermediate Schouten curvature 
yields the vanishing of Betti numbers on locally conformally flat manifolds as consequences of the B\"{o}chner formula 
as in \cite{Nay97, GLW2005}.  Secondly and more interestingly, when the intermediate Schouten curvature
is nonnegative, these $p$-Laplace equations facilitate the geometric applications
of $p$-superharmonic functions and the nonlinear potential theory. This leads to the estimates on Hausdorff dimension of singular sets and 
vanishing of homotopy groups that is inspired by and extends the work in \cite{SY1988, MQ22}. In the forthcoming paper 
\cite{MQ23} we will present our results on the precise asymptotic behavior of $p$-superharmonic functions at singularities.
\end{abstract}


\subjclass{53C21; 31B35; 53A30; 31B05}
\keywords {Intermediate Schouten curvature tensor, $p$-Laplace equations, $p$-superharmonic functions, the Wolff potentials, $p$-thinness, Hausdorff
dimensions}

\maketitle

\section{Introduction}\label{Sec:intr}

The potential theory is a classic and powerful approach to the study of partial differential equations.
Recently $n$-Laplace equations as natural extensions of the Gauss curvature equations in conformal geometry 
have been introduced and used for establishing Huber-type theorems in higher dimensions in \cite{MQ21, MQ021}. 
More applications of linear potential theory in conformal geometry have been 
investigated in \cite{MQ22}. Those works renewed and advocated more geometric applications of the  potential theory which are represented by  
geometric applications of subharmonic functions and the potential theory in 2 dimensions, for instance, in \cite{Hu, AH73}. 
In this paper we introduce the $p$-Laplace equations and explore the geometric 
applications of the nonlinear potential theory in conformal geometry for $p\in (2, n)$. We in general assume the dimension of manifolds is
greater than $2$ in this paper unless otherwise specified.


\subsection{$p$-Laplace equations in conformal geometry and applications}
In conformal geometry one often encounters the Schouten curvature tensor on $n$ dimensional manifolds
$$
A = \frac 1{n-2}(Ric - \frac 1{2(n-1)}R\, g)
$$
where $Ric$ stands for the Ricci curvature tensor and $R$ stands for the scalar curvature of the metric $g$ on $n$-manifolds respectively. For 
simplicity, we often let $J =\text{Tr}A =  \frac 1{2(n-1)}R$. In this paper we want to turn the attention to the following intermediate Schouten 
curvature tensor
\vskip 0.05in \noindent
\begin{equation}\label{Equ:p-schouten}
A^{(p)} = (p-2) A + J\, g
\end{equation}
\vskip 0.05in \noindent
for $p\in (1, \infty)$. This curvature tensor plays an intermediate role between the scalar curvature $Jg$ and the Schouten curvature
$A$ for $p\in [2, \infty)$. For the convenience of readers, let us recall (please 
see, for instance, \cite{Lind06})
$$
\Delta_p u = \text{div}(|\nabla u|^{p-2}\nabla u) \text{ and } \Delta_\infty u = u_{ij} u_iu_j.
$$
It turns out that the transformation formula of $A^{(p)}$-curvature under a conformal change gives rise to 
the $p$-Laplace equation
\vskip 0.05in \noindent
\begin{equation}\label{Equ:p-Laplace-intr}
-\Delta_p u + \frac {n-p}{2(p-1)} |\nabla u|^{p-2}A^{(p)}(\nabla u) u = \frac {n-p}{2(p-1)}(|\nabla u|^{p-2}A^{(p)}(\nabla u))[\bar g] u^q
\end{equation}
\vskip 0.05in \noindent
for $\bar g = u^\frac {4(p-1)}{n-p} g$ and $p\in (1, n)$, where $A^{(p)}(\nabla u)$ is $A^{(p)}$-curvature in $\nabla u$ direction and 
$$
q = \frac {2p(p-1)}{n-p}+1\in (1, \infty).
$$
For $p=2$, this is simply the well-known scalar curvature equation
\vskip 0.05in \noindent
\begin{equation}\label{Equ:yamabe-intr}
-\Delta u + \frac {n-2}2 J u = \frac {n-2}2 J [\bar g] u^\frac {n+2}{n-2}
\end{equation}
\vskip 0.05in \noindent
for $\bar g = u^\frac 4{n-2}\, g$. For $p=n$, we recover the $n$-Laplace equations introduced in \cite{MQ21, MQ021}
\vskip 0.05in \noindent
\begin{equation}\label{Equ:n-Laplace-intr}
-\Delta_n \phi + |\nabla \phi|^{n-2}Ric(\nabla \phi) = (|\nabla \phi|^{n-2}Ric(\nabla \phi))[\bar g] e^{n\phi}
\end{equation}
\vskip 0.05in \noindent
for $\bar g = e^{2\phi}\, g$. For $p\in (n, \infty)$, the $p$-Laplace equation \eqref{Equ:p-Laplace-intr} remains valid for 
both $\frac {4(p-1)}{n-p}$ and $q=\frac {2p(p-1)}{n-p} + 1$ being negative. 
In fact, taking $p\to \infty$, it becomes the infinite Laplace equation
\vskip 0.05in \noindent
\begin{equation}\label{Equ:infinite-Laplace-intr}
- \Delta_\infty u - \frac 12 |\nabla u|^2 A(\nabla u) u = -\frac 12 (|\nabla u|^2 A(\nabla u))[\bar g] u^{-7}
\end{equation} 
\vskip 0.05in \noindent
for $\bar g = u^{-4}\, g$. 
\\

We want to mention at this point that there have been interests in the study of quasilinear elliptic equations with the 
nonlinearities similar to those in \eqref{Equ:p-Laplace-intr}, for example, in \cite{BV21, CHZ22}. 
In Section \ref{Sec:intr-p-Laplace} we discuss how the $p$-Laplace equation \eqref{Equ:p-Laplace-intr} 
interpolates the scalar curvature equation \eqref{Equ:yamabe-intr}, $n$-Laplace equations \eqref{Equ:n-Laplace-intr} on Ricci curvature $Ric$, 
and the infinite Laplace equations \eqref{Equ:infinite-Laplace-intr} on the Schouten curvature $A$. We also compare the 
positivity of $A^{(p)}$ with other intermediate positivities of the Schouten curvature tensor $A$. 
\\

As a consequence of Lemma \ref{Lem:cone-comparison} on the comparison of the intermediate positivities of the Schouten
curvature tensor, following the application of the B\"{o}chner formula for harmonic forms in \cite{Nay97, GLW2005}, we observe (see also 
Theorem \ref{Thm:glw-consequence}) 

\begin{theorem}\label{Thm:vanishing-betti-intr} Let $(M^n, g)$ be a compact locally conformally flat manifold with $A^{(p)}\geq 0$ for $p\in [2, n)$. 
Suppose that the scalar curvature is positive somewhere on $M^n$. Then, for $\frac {n-p}2 +1\leq k\leq \frac {n+p}2-1$, the cohomology spaces 
$H^k(M^n, R)=\{0\}$, unless $(M^n, g)$ is isometric to a compact quotient of $\mathbb{H}^r\times \mathbb{S}^{n-r}$ for 
$r = \frac {n-p}2+1\leq \frac n2$.
\end{theorem}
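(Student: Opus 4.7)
The plan is to run the Bochner--Weitzenböck technique of Nayatani and Guan--Lin--Wang directly, using Lemma \ref{Lem:cone-comparison} to convert the hypothesis $A^{(p)}\ge 0$ into the eigenvalue data that controls the curvature term on $k$-forms of a locally conformally flat manifold.

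First I would record the pointwise consequences of the hypothesis. Since $\mathrm{tr}(A^{(p)})=(p-2)J+nJ=(n+p-2)J$, the assumption $A^{(p)}\ge 0$ forces $J\ge 0$ everywhere and, diagonalizing $A$ with eigenvalues $\lambda_1\le\cdots\le\lambda_n$, yields $\lambda_i\ge -J/(p-2)$ for every $i$ (here $p>2$; the case $p=2$ reduces to $J\ge 0$ and the trivial middle-degree range). Next, because $(M^n,g)$ is locally conformally flat the Weyl tensor vanishes and the sectional curvatures are $R_{ijij}=\lambda_i+\lambda_j$ in an eigenframe; a standard calculation then gives that the Weitzenböck curvature on a coordinate $k$-form $e_I=e_{i_1}\wedge\cdots\wedge e_{i_k}$ equals
\[
\langle R_k e_I, e_I\rangle=\sum_{i\in I,\; j\notin I}(\lambda_i+\lambda_j)=(n-2k)\sigma_I+kJ,\qquad \sigma_I:=\sum_{i\in I}\lambda_i.
\]

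For $k\le n/2$ one has $n-2k\ge 0$ and $\sigma_I\ge -kJ/(p-2)$, so
\[
(n-2k)\sigma_I+kJ\ \ge\ \frac{kJ}{p-2}\bigl(2k-(n-p+2)\bigr),
\]
which is nonnegative precisely when $k\ge\frac{n-p}{2}+1$. The analogous computation using $\sigma_I=J-\sigma_{I^c}$ handles $k\ge n/2$ and gives nonnegativity exactly for $k\le\frac{n+p}{2}-1$. Combining this with the Bochner identity $\int_{M^n}|\nabla\omega|^2+\int_{M^n}\langle R_k\omega,\omega\rangle=0$ for a harmonic $k$-form $\omega$ shows that in the stated range any harmonic form is parallel and the Weitzenböck term vanishes pointwise along $\omega$.

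The main obstacle is the rigidity step. Positivity of the scalar curvature at some point means $J(x_0)>0$, so at $x_0$ the only way to saturate the inequality above is to have equality in $\sigma_I=-kJ/(p-2)$ and every $\lambda_i$ with $i\in I$ must equal $-J/(p-2)$, while the remaining eigenvalues saturate the complementary bound. A nonzero parallel $k$-form then propagates this two-eigenvalue structure to all of $M^n$ via the decomposition $TM=V_I\oplus V_{I^c}$ determined by the form; holonomy reduces, the de Rham splitting applies to the universal cover, and on each factor the constant eigenvalue of $A$ forces a space form. The precise numerology of the eigenvalues (one negative block of dimension $r=\frac{n-p}{2}+1$ and one positive block of dimension $n-r$) identifies the factors as $\H^r$ and $\s^{n-r}$ with the specific curvatures making the product locally conformally flat, which is precisely the exceptional case stated. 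I would carry out this rigidity argument exactly as in \cite{Nay97, GLW2005}, with Lemma \ref{Lem:cone-comparison} furnishing the eigenvalue bookkeeping needed to identify $r$.
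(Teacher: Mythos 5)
Your proposal is correct and follows essentially the same route as the paper: you convert $A^{(p)}\ge 0$ into nonnegativity of the Weitzenb\"ock curvature term on $k$-forms of a locally conformally flat metric (your direct eigenvalue computation is exactly the content of Lemma \ref{Lem:cone-comparison}), apply the B\"ochner formula with $J>0$ at a point to kill harmonic forms in the open range, and isolate the endpoint equality case. Your deferral of the equality rigidity to \cite{Nay97, GLW2005} (via Lichnerowicz/Lafontaine) is also precisely what the paper does, so nothing essential is missing.
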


Notice that the situation is well-understood when $p=n$ (see for instance, \cite{MQ021} and references therein). Please check
\eqref{Equ:example} and see how effective and sharp  
the intermediate Schouten curvature $A^{(p)}$ is in detecting the vanishing of cohomology 
and recognizing $\mathbb{H}^k\times\mathbb{S}^{n-k}$. 
\\

More interestingly, we are able to use the Wolff potential to estimate the asymptotic behavior of $p$-superharmonic 
functions at singularities and conclude

\begin{theorem}\label{Thm:main-thm-1-intr} Suppose that $S$ is a compact subset of a bounded domain $\Omega$ in 
$\mathbb{R}^n$.  And suppose that there is a metric $\bar g$ on  $\Omega\setminus S$ that is conformal to the Euclidean 
metric and that $\bar g$ is geodesically complete near $S$. Assume that $A^{(p)} [\bar g]\geq 0$ for some $p\in [2, n)$.
Then
$$
dim_{\mathcal{H}}(S) \leq \frac {n-p}2.
$$
\end{theorem}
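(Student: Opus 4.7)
The plan is to first translate the geometric hypothesis into an analytic statement about $u$, then apply nonlinear potential theory to constrain the size of $S$. Since the background metric $g$ is Euclidean, $A^{(p)}[g]\equiv 0$, so the $p$-Laplace equation \eqref{Equ:p-Laplace-intr} on $\Omega\setminus S$ collapses to
\[
-\Delta_p u \;=\; \tfrac{n-p}{2(p-1)}\, |\nabla u|^{p-2}\, A^{(p)}(\nabla u)[\bar g]\, u^{q} \;\geq\; 0,
\]
using $A^{(p)}[\bar g]\geq 0$ and $u>0$. Thus $u$ is a positive $p$-superharmonic function on $\Omega\setminus S$, with Riesz measure $\mu := -\Delta_p u$. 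Geodesic completeness of $\bar g = u^{4(p-1)/(n-p)} g$ near $S$ forces the radial integral $\int_0^{r_0} u(x_0+s\omega)^{2(p-1)/(n-p)}\,ds$ to diverge for each $x_0\in S$ and a.e.\ $\omega\in\mathbb{S}^{n-1}$, and via a Harnack inequality for $p$-superharmonic functions this forces $\inf_{\partial B(x_0,r)\setminus S} u \gtrsim r^{-(n-p)/(2(p-1))}$ as $r\to 0$. The model $u(x)=|x|^{-(n-p)/(2(p-1))}$, positive and $p$-superharmonic on $\mathbb{R}^n\setminus\{0\}$ and generating the cylindrical metric $\bar g=|x|^{-2}g$ (complete at $0$), calibrates this rate and identifies the critical exponent $(n-p)/2$.

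The next step is to invoke the Kilpel\"ainen--Mal\'y pointwise estimates: on any ball $B(x,2R)\subset\Omega$,
\[
c^{-1} W_1^p(\mu)(x) \;\leq\; u(x) \;\leq\; c\bigl(\inf_{B(x,2R)} u + W_1^p(\mu)(x)\bigr),\qquad W_1^p(\mu)(x)=\int_0^R \Bigl(\tfrac{\mu(B(x,r))}{r^{n-p}}\Bigr)^{\!\!1/(p-1)}\frac{dr}{r}.
\]
Using the upper bound and controlling $\inf_{B(x,2R)} u$ by choosing $R$ so that $B(x,2R)$ reaches a region away from $S$, the established blow-up rate yields $W_1^p(\mu)(x)\gtrsim d(x,S)^{-(n-p)/(2(p-1))}$ for $x$ near $S$. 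A direct computation with the Wolff integral (for a measure with $\mu(B(y,r))\sim r^\beta$ at $y\in S$ one gets $W_1^p(\mu)(x)\sim d(x,S)^{-(n-p-\beta)/(p-1)}$) matches this to $\beta=(n-p)/2$, giving the sharp density estimate $\mu(B(x,r))\gtrsim r^{(n-p)/2}$ for $x\in S$ and small $r$.

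A Vitali covering argument concludes the proof: cover $S$ by balls $B(x_i,5r_i)$ with $B(x_i,r_i)$ disjoint, $x_i\in S$, $r_i<\delta$. Then
\[
\sum_i r_i^{(n-p)/2} \;\lesssim\; \sum_i \mu(B(x_i,r_i)) \;\leq\; \mu(\Omega) \;<\; \infty,
\]
so for any $s>(n-p)/2$, $\mathcal{H}^s_{10\delta}(S) \lesssim \sum_i r_i^s \leq \delta^{s-(n-p)/2} \sum_i r_i^{(n-p)/2} \to 0$ as $\delta\to 0$. Hence $\dim_{\mathcal{H}}(S)\leq (n-p)/2$.

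The principal obstacle is the sharp density estimate $\mu(B(x,r))\gtrsim r^{(n-p)/2}$: converting the qualitative blow-up of $u$ forced by geodesic completeness into a lower bound on $\mu$ with the exact exponent $(n-p)/2$. This demands a careful matching of the conformal weight $4(p-1)/(n-p)$ governing the blow-up rate of $u$ with the Wolff potential scaling $1/(p-1)$. Modulo this step, the argument is a $p$-Laplace generalization (via Wolff potentials) of the linear potential-theoretic approach of \cite{MQ22} for $p=2$.
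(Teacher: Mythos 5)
Your overall strategy (reduce to a density estimate for the Riesz measure $\mu=-\Delta_p u$ and finish with a Vitali covering) is reasonable in outline, and the covering step at the end is standard and fine \emph{if} the density bound held. But the route you propose to the density bound has a genuine gap, and it is the heart of the theorem, not a technical detail. Geodesic completeness only tells you that $\int u^{2(p-1)/(n-p)}\,ds$ diverges along curves running into $S$; this is compatible with $u(x)=o\bigl(|x-x_0|^{-(n-p)/(2(p-1))}\bigr)$, e.g. $u^{2(p-1)/(n-p)}(x_0+s\omega)\sim \bigl(s\log(1/s)\bigr)^{-1}$ still has divergent integral, so completeness does \emph{not} force the power-law lower bound $\inf_{\partial B(x_0,r)}u\gtrsim r^{-(n-p)/(2(p-1))}$, and hence not the uniform estimate $\mu(B(x,r))\gtrsim r^{(n-p)/2}$ either. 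Moreover, even granting a pointwise lower bound on $W^{\mu}_{1,p}$ near $S$, you cannot invert it to a lower bound on $\mu(B(x,r))$ at \emph{every} scale with the matching exponent: a large Wolff potential can be produced by mass concentrated at isolated scales. The paper avoids both problems by running the argument in the opposite, one-sided direction with an $\varepsilon$ of room: assuming $\dim_{\mathcal H}(S)>\frac{n-p}2$, a Frostman-type selection (\cite{Kp19}) produces a point $x_0\in S$ with $\mu(B(x_0,t))\le Ct^{(n-p+3\varepsilon)/2}$; the key estimate Theorem \ref{Thm:wolff potential upper bdd} converts this upper density bound into $W^{\mu}_{1,p}(x,r_0)\le C|x-x_0|^{-\frac{n-p-\varepsilon}{2(p-1)}}$ off a set $E$ that is $p$-thin at $x_0$ (the exceptional set is unavoidable, since $\mu(B(x_0,t))$ does not control $\mu(B(x,t))$ for $t\ll|x-x_0|$); then Theorem \ref{Thm:main-use-wolff} bounds $u$ off $E$, Theorem \ref{Thm:segment-escape} gives a ray to $x_0$ avoiding $E$, and the $\bar g$-length of that ray is finite, contradicting completeness. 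Your proposal has no analogue of the thin exceptional set or of the escape-ray mechanism, and the sharp two-sided ``matching'' you flag as the principal obstacle cannot be made rigorous as stated.

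There is a second, logically prior gap: you define $\mu=-\Delta_p u$ only on $\Omega\setminus S$, yet later you evaluate $\mu$ on balls $B(x_i,r_i)$ centered at points of $S$ and use $\mu(\Omega)<\infty$. Neither makes sense until you show that $u$ extends to a $p$-superharmonic function on all of $\Omega$, so that $-\Delta_p u$ is a nonnegative Radon measure across $S$, finite near the compact set $S$. This is Step 1 of the paper's proof: it uses that $u\to\infty$ at $S$ (a consequence of the nonnegative scalar curvature implied by $A^{(p)}[\bar g]\ge 0$, via \cite{MQ021, CHY}) together with a level-set cut-off argument as in \cite{MQ22}; the same extension is also needed before applying the Kilpel\"ainen--Mal\'y estimates on balls that meet $S$. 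A minor further point: the weak Harnack inequality for $p$-superharmonic functions controls the infimum by an integral average, not by values along a single ray, so the propagation from radial divergence to a bound on $\inf_{\partial B(x_0,r)}u$ would need justification even if the rate claim were available.
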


Again, when $p=n$, it is known that $S$ can only be at most finitely many points. Please also check again
\eqref{Equ:example} and see how effective and sharp  
the intermediate Schouten curvature $A^{(p)}$ is in estimating the Hausdorff dimensions
and recognizing $\mathbb{H}^k\times\mathbb{S}^{n-k}$. 
There have been extensive studies on the Hausdorff dimensions in \cite{CQY, CH02, CHY, Gn05,Ca12, GMS, GM14, Zh18, DY21, HS21,MQ22} following \cite{SY1988}.  
On a Kleinian manifold $\Omega(\Gamma)/\Gamma$, conversely, a metric is constructed in \cite{Nay97} such that the scalar curvature of this
metric is nonnegative (positive) if the Hausdorff dimension of the set of limit points $L(\Gamma)$ is $\leq \frac {n-2}2$ ($< \frac{n-2}2$). The intermediate Schouten 
curvature tensor $A^{(p)}$ of the Nayatani's metrics in \cite{Nay97} does not seem to be easily understood solely in terms of the Hausdorff dimension $\delta(\Gamma)$
in general. 
\\

However, as a consequence of Theorem \ref{Thm:main-thm-1-intr}, similar to that in \cite{SY1988}, we have (see also Corollary \ref{Cor:vanishing-homotopy}),

\begin{theorem}\label{Thm:main-thm-2-intr} Let $(M^n, g)$ be a compact locally conformally flat manifold with $A^{(p)}\geq 0$ for
$p\in [2, n]$. Then,
for $1< k < \frac {n+p}2 -1$, the homotopy groups $\pi_k(M^n)$ are trivial.
\end{theorem}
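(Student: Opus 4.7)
The plan is to follow the Schoen-Yau strategy \cite{SY1988}: pass to the universal cover, realize it as an open subset of $\mathbb{S}^n$ via the developing map, apply Theorem \ref{Thm:main-thm-1-intr} to control the Hausdorff dimension of the complement, and finish with Alexander duality and the Hurewicz theorem. First, I would lift to the universal cover $\pi:\tilde M^n\to M^n$, so that the pullback metric $\tilde g$ is locally conformally flat, geodesically complete (since $M^n$ is compact), and still satisfies $A^{(p)}\geq 0$. Since $\text{tr}_g A^{(p)}=(n+p-2)J$, the scalar curvature of $\tilde g$ is nonnegative, so by the developing map theory of Kuiper as refined by Schoen-Yau, the developing map $\Phi:\tilde M^n\to\mathbb{S}^n$ is an injective conformal embedding. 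I would then identify $\tilde M^n$ with $\Omega:=\Phi(\tilde M^n)$ and set $\Lambda:=\mathbb{S}^n\setminus\Omega$, a compact set.

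Second, I would bound $\dim_{\mathcal{H}}(\Lambda)$ by invoking Theorem \ref{Thm:main-thm-1-intr} in finitely many stereographic charts. For any point $q\in\Omega$, stereographic projection from $q$ identifies $\mathbb{S}^n\setminus\{q\}$ conformally with $\mathbb{R}^n$ and sends $\Lambda$ to a compact subset of a bounded domain; the pullback of $\tilde g$ is conformal to the Euclidean metric and remains complete near $\Lambda$ because the projection is a local diffeomorphism away from $q$. Theorem \ref{Thm:main-thm-1-intr} applies in each such chart and yields
$$\dim_{\mathcal{H}}(\Lambda)\leq \frac{n-p}{2}.$$

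Third, by the Szpilrajn-Marczewski inequality the topological dimension, and hence the \v Cech cohomological dimension, of $\Lambda$ is bounded by $\dim_{\mathcal{H}}(\Lambda)$, so $\check H^j(\Lambda;\mathbb{Z})=0$ for every integer $j>(n-p)/2$. Alexander duality in $\mathbb{S}^n$ then gives
$$\tilde H_k(\Omega;\mathbb{Z})\cong \check H^{n-k-1}(\Lambda;\mathbb{Z}),$$
which vanishes for every integer $k<(n+p)/2-1$. Since $\tilde M^n\cong\Omega$ is simply connected, the Hurewicz theorem applied inductively forces $\pi_k(\tilde M^n)=0$ in the same range; covering space theory then yields $\pi_k(M^n)=\pi_k(\tilde M^n)$ for $k\geq 2$, completing the proof.

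The principal obstacle is the first step: extracting injectivity of the developing map from the hypothesis $A^{(p)}\geq 0$, which only furnishes $J\geq 0$ rather than $J>0$. The degenerate locus where $J$ vanishes must be handled either by a conformal deformation to a nearby metric with strictly positive scalar curvature or by invoking a sharpened Schoen-Yau injectivity statement. A secondary technical point is to verify, chart by chart, that the geodesic completeness hypothesis of Theorem \ref{Thm:main-thm-1-intr} is preserved under stereographic projection; the remaining topological ingredients (Szpilrajn's bound, \v Cech-Alexander duality for arbitrary compacta in $\mathbb{S}^n$, and Hurewicz) are standard.
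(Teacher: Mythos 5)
Your proposal is correct, and its geometric core is exactly the paper's: pass to the universal cover, invoke the Schoen--Yau injectivity of the developing map (the hypothesis $A^{(p)}\geq 0$ gives nonnegative scalar curvature since $\mathrm{Tr}\,A^{(p)}=\frac{n+p-2}{2(n-1)}R$), and bound $\dim_{\mathcal{H}}(\mathbb{S}^n\setminus\Omega)\leq\frac{n-p}{2}$ by transferring Theorem \ref{Thm:main-thm-1-intr} to the sphere via stereographic projection from a point of $\Omega$ --- this is precisely Theorem \ref{Thm:main-geometric-thm-2} and Corollary \ref{Cor:vanishing-homotopy}. Where you genuinely diverge is the topological endgame: the paper argues as in \cite{SY1988} that, since $\mathbb{S}^n\setminus\Omega$ has codimension at least $\frac{n+p}{2}$, any map $\mathbb{S}^k\to\Omega$ with $k<\frac{n+p}{2}-1$ can be contracted inside $\mathbb{S}^n$ by a homotopy put in general position to avoid the complement, giving $\pi_k(\Omega)=0$ and then $\pi_k(M^n)=0$ through the covering; you instead run Szpilrajn's inequality, vanishing of \v{C}ech cohomology above the covering dimension, Alexander duality, and an inductive Hurewicz argument on the simply connected $\Omega$. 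Both routes are valid and give the same range; the paper's is shorter but leans on a general-position perturbation lemma for maps of disks off a closed set of small Hausdorff dimension, while yours trades that for standard dimension theory and duality, which is arguably easier to make fully rigorous (Hurewicz needs no CW hypothesis, and $\Omega$ is an open subset of $\mathbb{S}^n$ in any case). Two small points: your flagged ``principal obstacle'' is not one, since Schoen--Yau prove injectivity of the developing map for complete locally conformally flat metrics with scalar curvature merely nonnegative, which is exactly what the paper cites, so no deformation to strict positivity is needed; and the endpoint $p=n$ allowed in the statement is not covered by Theorem \ref{Thm:main-thm-1-intr} (stated for $p\in[2,n)$) --- there one uses the known fact, noted in the paper, that the singular set consists of at most finitely many points (cf. \cite{MQ21,MQ021}), after which either topological argument applies verbatim.
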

 
Theorem \ref{Thm:main-thm-1-intr} and Theorem \ref{Thm:main-thm-2-intr} extends some related results in \cite{SY1988} (and 
for $p=n$ in \cite{MQ21, MQ021}) as the intermediate Schouten curvature and the $p$-Laplace equations substitute the scalar 
curvature and the scalar curvature equations. 

\subsection{The Wolff potentials and $p$-thinness}

In Section \ref{Sec:p-superharmonic-wolff} we review definitions and collect some useful facts in potential theory. And, 
most importantly, we define the concept of $p$-thinness (cf. Definition \ref{Def:quasi-p-thin}) 
 for the study of singular behaviors of $p$-superharmonic functions, which turns out to be different from the thinness 
 defined by the Wiener integral in nonlinear potential theory in general dimensions (cf. \cite{KM94} for instance). The Wolff potential for 
 this paper is defined as follows:
\vskip 0.05in \noindent
 $$
 W^\mu_{1, p} (x, r) = \int_0^r (\frac {\mu(B(x, t))}{t^{n-p}})^\frac 1{p-1} \frac {dt}t
 $$
 \vskip 0.05in \noindent
 for a nonnegative Radon measure $\mu$. It is well known that the Wolff potential is the same as the classic Newton potential when $p=2$.
 We obtain in Section \ref{Sec:p-superharmonic-wolff} that (see also Theorem \ref{Thm:wolff potential upper bdd})

 \begin{theorem}\label{Thm:main-analytic-intr}
 Suppose $\mu$ is a nonnegative finite Radon measure on a bounded domain 
 $\Omega\subset \mathbb{R}^n$. Assume that, for a point $x_{0}\in\Omega$,  some number $m\in (0, n-p)$ and $p\in [2, n)$,
\begin{equation*}
\mu(B(x_{0}, t))\le Ct^{m}
\end{equation*}
for all $t\in (0, 3r_0)$ with $B(x_0, 3r_0)\subset \Omega$. Then, for any $\varepsilon>0$ small, there are a subset $E\subset\Omega$, which
is $p$-thin at $x_{0}$, and a constant $C>0$ such that
\vskip 0.05in \noindent
\begin{equation}\label{Equ:wolff-upper-intr}
W_{1,p}^{\mu}(x, \, r_0)\le C|x - x_0|^{-\frac{n-p-m+\varepsilon}{p-1}} \text{ for all $x\in\Omega\setminus E$.}
\end{equation}
\end{theorem}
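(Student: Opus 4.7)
The strategy is to split the Wolff potential $W^\mu_{1,p}(x, r_0)$ according to whether the inner radius $t$ exceeds $|x - x_0|$, and to dispose of each piece separately. First I would write
\[
W^\mu_{1,p}(x, r_0) = \int_0^{2|x-x_0|} \left(\frac{\mu(B(x,t))}{t^{n-p}}\right)^{\frac{1}{p-1}} \frac{dt}{t} + \int_{2|x-x_0|}^{r_0} \left(\frac{\mu(B(x,t))}{t^{n-p}}\right)^{\frac{1}{p-1}} \frac{dt}{t}.
\]
The second integral is controlled unconditionally: for $t \ge 2|x - x_0|$ we have $B(x, t) \subset B(x_0, 2t) \subset B(x_0, 3 r_0)$, so the growth hypothesis gives $\mu(B(x, t)) \le C (2t)^m$; since $m < n - p$, the exponent $(m - n + p)/(p-1)$ is negative and direct integration produces the bound $C |x - x_0|^{-(n-p-m)/(p-1)}$, which is already strictly smaller than the target near $x_0$.

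For the first (near-origin) integral there is no pointwise control, and this is where a $p$-thin exceptional set $E$ has to be excised. Set $r_k = 2^{-k} r_0$ and define
\[
A_k = \left\{x \in B(x_0, 4 r_k) : \int_0^{4 r_k} \left(\frac{\mu(B(x,t))}{t^{n-p}}\right)^{\frac{1}{p-1}} \frac{dt}{t} > r_k^{-(n-p-m+\varepsilon)/(p-1)} \right\}.
\]
Since for such $x$ and $t$ one has $B(x,t) \subset B(x_0, 8 r_k)$, the relevant integrand depends only on $\mu$ restricted to $B(x_0, 8 r_k)$, whose total mass is at most $C r_k^m$ by the growth hypothesis. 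Applying the capacitary weak-type inequality for Wolff potentials (the Hedberg--Wolff / Maz'ya capacity estimate) yields
\[
\mathrm{Cap}_{1,p}(A_k) \le C \frac{\mu(B(x_0, 8 r_k))}{\bigl(r_k^{-(n-p-m+\varepsilon)/(p-1)}\bigr)^{p-1}} \le C r_k^{n - p + \varepsilon}.
\]
Taking $E = \bigcup_{k \ge k_0} A_k$, the Wiener-type sum controlling $p$-thinness,
\[
\sum_k \left(\frac{\mathrm{Cap}_{1,p}(A_k)}{r_k^{n-p}}\right)^{\frac{1}{p-1}} \le C \sum_k 2^{-k \varepsilon/(p-1)},
\]
is a convergent geometric series, so $E$ meets the $p$-thinness condition of Definition \ref{Def:quasi-p-thin}.

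Finally, for $x \in \Omega \setminus E$ with $r_k < |x - x_0| \le r_{k-1}$, one has $2 |x - x_0| \le 4 r_{k-1}$, so the near-origin integral is dominated by the defining threshold for $A_{k-1}$, giving $r_{k-1}^{-(n-p-m+\varepsilon)/(p-1)} \le C |x - x_0|^{-(n-p-m+\varepsilon)/(p-1)}$. Combining this with the far-scale bound from the first step yields \eqref{Equ:wolff-upper-intr}. The main obstacle is establishing the sharp capacity estimate for the level sets of a Wolff potential built from a growth-bounded measure and aligning its output quantitatively with the particular $p$-thinness notion used in the paper; the small loss $\varepsilon$ in the exponent is intrinsic to this excision method, since converting an $L^{p-1}$-type capacitary estimate into a pointwise bound outside a $p$-thin set necessarily costs a slight margin in the decay rate.
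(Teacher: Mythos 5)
Your overall strategy is the same as the paper's: split the Wolff potential at the scale $|x-x_0|$, estimate the far range directly from the growth hypothesis $\mu(B(x_0,t))\le Ct^m$, and excise a bad set for the near range via a capacitary weak-type estimate for level sets of the truncated potential (the paper obtains that estimate by solving $-\Delta_p u_i=\mu$ in $\Omega_i(x_0)$ using Lemma \ref{Lem:existence-p-sh}, the lower Wolff bound of Theorem \ref{Thm:main-use-wolff}, and Lemma \ref{Lem:cap-estimate}; your citation of the Hedberg--Wolff/Maz'ya weak-type inequality is the same ingredient). The far-range estimate and the final step (for $r_k<|x-x_0|\le r_{k-1}$ you only use that $x$ violates the defining inequality of $A_{k-1}$) are fine. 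The genuine gap is in the claim that $E=\bigcup_{k\ge k_0}A_k$ is $p$-thin at $x_0$.

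Note first that your sets are nested: if $i\ge k$ then $A_i\subset A_k$, since $B(x_0,4r_i)\subset B(x_0,4r_k)$, the integral over $(0,4r_k)$ dominates the one over $(0,4r_i)$, and the threshold $r_k^{-(n-p-m+\varepsilon)/(p-1)}$ is lower than $r_i^{-(n-p-m+\varepsilon)/(p-1)}$. So $E=A_{k_0}$, and all your argument provides is a single global capacity bound $\mathrm{Cap}_{1,p}(A_{k_0})\le Cr_{k_0}^{n-p+\varepsilon}$. That does not imply $p$-thinness at $x_0$: Definition \ref{Def:quasi-p-thin} requires annulus-by-annulus control of $cap_p(E\cap\omega_i(x_0),\Omega_i(x_0))$, and a set of arbitrarily small total capacity can fail \eqref{Equ:quasi-p-thin} badly --- for instance a ball of radius $2^{-i}/10$ placed in each annulus $\omega_i(x_0)$ has $cap_p\approx 2^{-i(n-p)}$, so every term of the sum in \eqref{Equ:quasi-p-thin} is of order one and the sum diverges, while the total capacity is finite and as small as you like. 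Your displayed Wiener-type sum substitutes $\mathrm{Cap}_{1,p}(A_k)$ for $cap_p(E\cap\omega_k(x_0),\Omega_k(x_0))$, and that substitution is exactly what does not follow: the bound on $A_k$ says nothing about how much of $A_k$ sits in annuli $\omega_i$ with $i\gg k$. (A smaller mismatch: the paper's thinness for $p\in(2,n)$ carries no exponent $\tfrac1{p-1}$; with geometric decay this would be harmless, but the sum you must control is the one in \eqref{Equ:quasi-p-thin}.) The repair is precisely the paper's construction: define the bad set annulus-wise, e.g. $E_i=\{x\in\omega_i(x_0):\,W^\mu_{1,p}(x,2^{-i-2})\ge C|x-x_0|^{-(n-p-m+\varepsilon)/(p-1)}\}$, apply your weak-type estimate with $\mu$ restricted to $\Omega_i(x_0)$ (via the solution $u_i$ in $\Omega_i$, Theorem \ref{Thm:main-use-wolff} and Lemma \ref{Lem:cap-estimate}) to get $cap_p(E_i,\Omega_i)\le C2^{-(n-p)i}2^{-\varepsilon i}$, and take $E=\bigcup_i E_i$; then \eqref{Equ:quasi-p-thin} holds by Lemma \ref{Lem:subadditivity-Lip}(4), and your final step is unchanged because it only ever uses the portion of the bad set lying in the annulus containing $x$.
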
 
 
 One crucial property of subsets $E$ which are $p$-thin at $x_0$ for us is Theorem \ref{Thm:segment-escape}, which states that
 there is always a ray from $x_0$ that escapes from a small neighborhood of $x_0$ without intersecting $E$. It is remarkable this helps make use of 
 the geodesic completeness in analysis, which seemed first observed in \cite{AH73} for a more straightforward proof of the main theorem on open surfaces 
 in \cite{Hu} (see also \cite{MQ22}).
 
 
 \subsection{Organization}
 The paper is organized as follows: In Section 2 we introduce the $p$-Laplace equations in conformal geometry 
 and discuss the comparisons of the positive cones associated with $A^{(p)}$ to other intermediate positivity of
 the Schouten curvature. In Section 3 we collect definitions and basic relevant facts in the study of 
 $p$-superharmonic functions and the nonlinear potential theory. Most importantly we introduce the concept of $p$-thin 
 in the nonlinear potential theory and prove Theorem \ref{Thm:main-analytic-intr}. In Section 4 we prove Theorem 
 \ref{Thm:main-thm-1-intr} using Theorem \ref{Thm:main-analytic-intr}. In Section \ref{Sec:topology} we derive the topological 
 consequences and prove Theorem \ref{Thm:vanishing-betti-intr} and Theorem \ref{Thm:main-thm-2-intr}. 
 

\section{Introduction of $p$-Laplace equations in conformal geometry}\label{Sec:intr-p-Laplace}

Let us recall the transformation formula of Ricci curvature tensor under a conformal change $\bar g = e^{2\phi} g$
\begin{equation*}
\bar R_{ij} = R_{ij} - \Delta \phi g_{ij} + (2-n) \phi_{i,j} + (n-2) \phi_i\phi_j + (2-n) |\nabla\phi|^2 g_{ij}.
\end{equation*}
For the scalar curvature, we then have
\begin{equation*}
e^{2\phi} \bar R = R - 2(n-1)\Delta \phi  - (n-1)(n-2)|\nabla\phi|^2.
\end{equation*}

\subsection{The intermediate Schouten curvature and its transformation formula}
We write
\begin{equation}\label{Equ:B^p}
A^{(p)}  = \frac 1{n-2}( (p-2)Ric + (n-p)\frac {R}{2(n-1)}\, g),
\end{equation}
where 
$$
A^{(2)} = \frac R{2(n-1)}\, g,  \ A^{(n)} = Ric, \text{ and } \text{Tr}A^{(p)} =\frac {n + p-2}{2(n-1)} \, R.
$$ 
Notice that $\frac {n-p}{n-2} = 1 - \frac {p-2}{n-2}$ and $A^{(p)}$ interpolates between $\frac R{2(n-1)}\, g$ 
and the Ricci curvature tensor $Ric$ as $p$ varies from $2$ to $n$. 
\\

For example, one may calculate the intermediate Schouten
curvature tensor on $\mathbb{H}^k\times\mathbb{S}^{n-k}$ for $1\leq k \leq \frac n2$ and $p= n-2k +2$,
\begin{equation}\label{Equ:example}
\aligned
A^{(p)} & = (p-2)A + Jg = (p-2)(\overset{k}{\overbrace{-\frac 12, \cdots, -\frac 12}}, \overset{n-k}{\overbrace{\frac 12,
\cdots, \frac 12}}) + \frac {n-2k}2 g\\
& =(\overset{k}{\overbrace{0, \cdots, 0}}, \overset{n-k}{\overbrace{n-2k, \cdots, n-2k}})\geq 0.
\endaligned
\end{equation}setminus
Here $\frac{n-p}2 = k-1$ and $\mathbb{H}^k\times \mathbb{S}^{n-k}$ topologically is $\mathbb{S}^n\setminus\mathbb{S}^{k-1}$.
It is also interesting to observe that $A^{(q)}>0$ when $q<n-2k+2$ and $A^{(q)}$ is negative in the first $k$ directions when
$q> n-2k+2$.
\\

For $p\in (1, n)$ and $\bar g = e^{2\phi}\, g = u^\frac {4(p-1)}{n-p} \, g$, we calculate
\begin{equation}\label{Equ:B^p}
\aligned
& A^{(p)}_{ij}[\bar g]  =  A^{(p)}_{ij}  - \frac {2(p-1)}{n-p} \left[ \frac {\Delta u}u g_{ij}  + (p-2) \frac {u_{i,j}}u \right] \, \\
  + \frac {2(p-1)}{n-p} & \left[(1 - (n+p-4)\frac {p-1}{n-p})\frac {|\nabla u|^2}{u^2}g_{ij} + ((p-2)+ 2(p-2)\frac {p-1}{n-p}) \frac {u_iu_j}{u^2} \right].
\endaligned
 \end{equation}
Multiplying $u |\nabla u|^{p-2}\frac {u_i}{|\nabla u|}\frac {u_j}{|\nabla u|}$ to and summing up on both sides of \eqref{Equ:B^p}, 
we arrive at the $p$-Laplace equation
\begin{equation}\label{Equ:p-Laplace-s}
-\Delta_p u + \frac {n-p}{2(p-1)}S^{(p)} (u) u = \frac {n-p}{2(p-1)}S^{(p)} (u)[\bar g] u^q,
\end{equation}
where 
$$
S^{(p)} (u) = |\nabla u|^{p-2} A^{(p)}(\nabla u),
$$
$A^{(p)}(\nabla u)$ is the $A^{(p)}$-curvature in the direction of $\nabla u$, and  
$q = \frac {2p(p-1)}{n-p} + 1 \in (1, \infty)$ monotonically increasing when $p\in (1, n)$. 


\subsection{The $p$-Laplace equations as $p\in (1, \infty)$}

Clearly, when $p=2$, \eqref{Equ:p-Laplace-s} becomes the scalar
curvature equation
$$
-\Delta u +\frac {n-2}{4(n-1)}R u = \frac {n-2}{4(n-1)}R[u^\frac 4{n-2}\, g]u^\frac {n+2}{n-2}.
$$
On the other hand, as $p\to n$, we calculate
$$
u -1 = e^{\frac {n-p}{2(p-1)}\phi} - 1 = \frac {n-p}{2(n-1)} \phi  + o(n-p),
$$
$$
u^q = e^{\frac {(n-p)q}{2(n-1)}\phi}\to e^{n\phi},
$$
$$
\nabla u = \frac {n-p}{2(n-1)}\nabla \phi\,  e^{\frac {n-p}{2(n-1)}\phi},
$$
$$
(\frac {2(p-1)}{n-p})^{p-1}\Delta_p u \to \Delta_n \phi, \text{ and } (\frac {2(n-1)}{n-p})^{p-2}S^{(p)}(u) \to |\nabla \phi|^{p-2}Ric(\nabla \phi).
$$
Therefore, from \eqref{Equ:p-Laplace-s}, as $p\to n$, we recover the $n$-Laplace equation 
$$
-\Delta_n \phi + |\nabla\phi|^{n-2}Ric(\nabla\phi) = (|\nabla\phi|^{n-2}Ric(\nabla\phi))[e^{2\phi}\, g] e^{n\phi},
$$ 
which was introduced and studied in \cite{MQ21, MQ021}. 
Thus $p$-Laplace equation \eqref{Equ:p-Laplace-s} in some way interpolates between the scalar curvature equation and the $n$-Laplace
equation on Ricci curvature for $p\in [2, n]$ in conformal geometry.
\\

The $p$-Laplace equations  \eqref{Equ:p-Laplace-s} still make sense for $p\in (n, \infty)$. As we mentioned in the introduction section, we may write 
$$A^{(p)} = (p-2) A + J\, g$$ 
and therefore $\lim_{p\to\infty} \frac 1{p-2} A^{(p)}  = A$.  Also notice that
$$
\lim_{p\to \infty} \frac 1{p-2} \frac 1{|\nabla u|^{p-4}} \Delta_p u = \Delta_\infty u = u_{ij}u_iu_j
$$
(cf. \cite[Chapter 8]{Lind06}). Therefore, from \eqref{Equ:p-Laplace-s}, after dividing both sides by $(p-2)|\nabla u|^{p-4}$ and taking limit as $p\to\infty$, we get
the infinite Laplace equation on Schouten curvature
\begin{equation}\label{Equ:infinite-Laplace}
- \Delta_\infty u - \frac 12 |\nabla u|^2 A(\nabla u) u = -\frac 12 (|\nabla u|^2 A(\nabla u))[\bar g] u^{-7},
\end{equation}
where $\bar g = u^{-4}\, g$. Therefore $p$-Laplace equation \eqref{Equ:p-Laplace-s} also interpolates between $n$-Laplace equation on Ricci
curvature and the infinite Laplace equation on Schouten curvature for $p\in (n, \infty)$ in conformal geometry.


\subsection{Intermediate positivity of the Schouten curvature tensor}\label{Subsec:cone-comparison}

By the discussion about the interpolation nature of the $p$-Laplace equations \eqref{Equ:p-Laplace-s} 
in the previous subsection, the positivity of $A^{(p)}$ describes stronger and stronger positivity of the Schouten 
curvature $A$ as $p$ increases in $[2, \infty)$. A way to measure the positivity of $A^{(p)}$ is to consider the cone
\begin{equation}\label{Equ:p-cone}
\mathcal{A}^{(p)} = \{(\lambda_1, \lambda_2, \cdots, \lambda_n)\in \mathbb{R}^n: (p-2)\min\{\lambda_i\}_{i=1}^n + \sum_{i=1}^n \lambda_i \geq 0\}.
\end{equation} 
Obviously, we have

\begin{lemma}\label{Lem:theta-cone} \quad
\begin{itemize}
\item  $\mathcal{A}^{(p_2)}\subset \mathcal{A}^{(p_1)} \text{ for $p_1 < p_2$}$;
\item $\mathcal{A}^{(2)} = \{(\lambda_1, \lambda_2, \cdots, \lambda_n)\in \mathbb{R}^n: \sum_{i=1}^n\lambda_i \geq 0\}$ is the baseline;
\item $\mathcal{A}^{(n)}$ stands for $Ric \geq 0$ when $(\lambda_1, \lambda_2, \cdots, \lambda_n)$ represents the curvature tensor $A$;
\item $\mathcal{A}^{(p)}$ approaches the nonnegative cone $\Gamma^n=\{\lambda\in \mathbb{R}^n:\lambda_i\geq 0\}$ as $p\to\infty$.
\end{itemize}
\end{lemma}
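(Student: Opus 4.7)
The plan is to verify each of the four bullet points directly from the definition
$\mathcal{A}^{(p)}=\{\lambda\in\mathbb{R}^n:(p-2)\min_i\lambda_i+\sum_i\lambda_i\geq 0\}$;
all four are essentially one-line checks, so I would simply arrange them in a convenient order.

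For the monotonicity, assume $2\leq p_1<p_2$ and take $\lambda\in\mathcal{A}^{(p_2)}$. If $\min_i\lambda_i\geq 0$ then $\sum_i\lambda_i\geq 0$ already, so $\lambda\in\mathcal{A}^{(p_1)}$ trivially. If $\min_i\lambda_i<0$, then since $0\leq p_1-2\leq p_2-2$, multiplying by the negative quantity $\min_i\lambda_i$ reverses the inequality, giving $(p_1-2)\min_i\lambda_i\geq(p_2-2)\min_i\lambda_i$, and adding $\sum_i\lambda_i$ to both sides yields $\lambda\in\mathcal{A}^{(p_1)}$. The $p=2$ statement is immediate because the $(p-2)\min_i\lambda_i$ term vanishes, leaving exactly the scalar-curvature-type condition $\sum_i\lambda_i\geq 0$.

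For $p=n$, I would unpack $A^{(n)}=(n-2)A+J\,g$ using $A=\frac 1{n-2}(Ric-\frac R{2(n-1)}g)$ and $J=\frac R{2(n-1)}$, which gives $A^{(n)}=Ric$ directly. If $\lambda_1,\dots,\lambda_n$ are the eigenvalues of $A$, then the eigenvalues of $Ric$ are $(n-2)\lambda_i+\operatorname{tr}A=(n-2)\lambda_i+\sum_j\lambda_j$, so $Ric\geq 0$ is equivalent to $(n-2)\lambda_i+\sum_j\lambda_j\geq 0$ for every $i$, which in turn is equivalent to the single inequality $(n-2)\min_i\lambda_i+\sum_j\lambda_j\geq 0$ defining $\mathcal{A}^{(n)}$. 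Finally, for $p\to\infty$, the inclusion $\Gamma^n\subset\mathcal{A}^{(p)}$ is obvious for every $p$; conversely, if $\lambda\in\bigcap_{p}\mathcal{A}^{(p)}$ then $\min_i\lambda_i\geq-\frac{\sum_i\lambda_i}{p-2}\to 0$ as $p\to\infty$, so $\min_i\lambda_i\geq 0$ and $\lambda\in\Gamma^n$.

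There is no real obstacle here; the only item that requires more than a line of manipulation is the identification $A^{(n)}=Ric$, and even that is a direct substitution. The arguments are independent of one another, so the order in the statement can be kept verbatim in the proof.
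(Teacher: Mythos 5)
Your verification is correct and is exactly the elementary check the paper intends---the lemma appears there with no written proof beyond the word ``Obviously,'' so your direct unpacking of the definition of $\mathcal{A}^{(p)}$ (including the identification $A^{(n)}=Ric$ and the eigenvalue computation $(n-2)\lambda_i+\sum_j\lambda_j$ for $Ric$) matches the intended argument. The only caveat is that your monotonicity step assumes $p_1\geq 2$: for $1<p_1<2$ and $\min_i\lambda_i\geq 0$ the term $(p_1-2)\min_i\lambda_i$ is no longer nonnegative and one needs the extra observation $\sum_i\lambda_i\geq n\min_i\lambda_i$, although this is immaterial for the paper, whose use of the cones is confined to $p\in[2,\infty)$.
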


It is interesting to compare the intermediate positivity cones $\mathcal{A}^{(p)}$ with those, for example, in connection to the curvature 
operator in the B\"{o}chner formula for harmonic forms \eqref{Equ:bochner} on locally conformally flat manifolds. 
In \cite{Nay97, GLW2005}, for the positivity of the curvature operator $\mathcal{R}$ on $r$-forms in \eqref{Equ:bochner}, 
the following symmetric cones are considered:
\begin{equation}\label{Equ:GLW-cone}
\mathcal{R}^{(r)} = \{(\lambda_1, \lambda_2, \cdots, \lambda_n)\in \mathbb{R}^n: 
\min \{ (n-r)\sum_{k=1}^r\lambda_{i_k} + r \sum_{k=r+1}^n \lambda_{i_k}\} \geq 0\}
\end{equation} 
for $r\in\{1, 2, \cdots, n\}$, where $(i_1, i_2, \cdots, i_n)$ is any permutation of $(1, 2, \cdots, n)$ and the minimum is taken over all permutations.
We observe the following

\begin{lemma}\label{Lem:cone-comparison}
\begin{equation}\label{Equ:g-cone}
\mathcal{R}^{(s)} \subset \mathcal{R}^{(r)} \quad \text{ for $0 < s \leq r \leq \frac n2$}\\
\end{equation}
and
\begin{equation}\label{Equ:cone-comparison}
\mathcal{A}^{(p)}  \subset \mathcal{R}^{(r)} \quad \text{ for $\frac {n-p}2 + 1\leq r \leq \frac n2$}.
\end{equation}
\end{lemma}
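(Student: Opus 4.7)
The plan is to recast both cones in a form that makes the inclusions transparent. Sort the eigenvalues $\lambda_1 \leq \cdots \leq \lambda_n$ and set $S = \sum_i \lambda_i$ and $\sigma_r = \lambda_1 + \cdots + \lambda_r$. For $\mathcal{R}^{(r)}$, I would rewrite the quantity inside the definition as $(n-2r)\sum_{i\in I}\lambda_i + rS$ ranging over $r$-subsets $I$, and note that for $r \leq n/2$ this is minimized at $I = \{1,\ldots,r\}$, giving
$$
\lambda \in \mathcal{R}^{(r)} \quad \Longleftrightarrow \quad (n-2r)\sigma_r + rS \geq 0.
$$
Introducing $\alpha_r := \sigma_r/r$ and $\beta_r := (S-\sigma_r)/(n-r)$, the averages of the $r$ smallest and of the $n-r$ largest values, this condition simplifies to the clean equivalent form $\alpha_r + \beta_r \geq 0$.

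For \eqref{Equ:g-cone}, I would then observe that both $\alpha_r$ and $\beta_r$ are nondecreasing in $r$ throughout $1 \leq r \leq n/2$. This is a one-line fact from the sorting: absorbing a value $\geq$ the running average into that average cannot decrease it, and removing a value $\leq$ the running average cannot decrease it either. The inclusion $\mathcal{R}^{(s)} \subset \mathcal{R}^{(r)}$ follows at once from $\alpha_s + \beta_s \leq \alpha_r + \beta_r$.

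For \eqref{Equ:cone-comparison}, given $\lambda \in \mathcal{A}^{(p)}$, the defining inequality reads $(p-2)\lambda_1 + S \geq 0$. The case $\lambda_1 \geq 0$ forces all entries nonnegative and the target inequality is trivial, so the substance is in the case $\lambda_1 < 0$. There I would use $\sigma_r \geq r\lambda_1$ together with $n-2r \geq 0$ to deduce
$$
(n-2r)\sigma_r + rS \;\geq\; r\bigl[(n-2r)\lambda_1 + S\bigr] \;\geq\; r\bigl[(p-2)\lambda_1 + S\bigr] \;\geq\; 0,
$$
where the middle inequality combines $\lambda_1 < 0$ with $n-2r \leq p-2$, the latter being precisely the hypothesis $r \geq (n-p)/2 + 1$.

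The only real obstacle I anticipate is spotting the reformulation $\alpha_r + \beta_r \geq 0$; after that, part (1) is a remark about monotonicity of averages, and part (2) is a short chain of inequalities in which the threshold $r = (n-p)/2 + 1$ is calibrated to preserve the direction of inequality when $\lambda_1$ is negative. No further subtlety appears: the restriction $r \leq n/2$ is used in both parts exactly to ensure that the coefficient $n-2r$ is nonnegative.
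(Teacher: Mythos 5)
Your proof is correct, and it matches the paper closely on the second inclusion while taking a cleaner route on the first. For \eqref{Equ:cone-comparison} your chain $(n-2r)\sigma_r+rS\ge r[(n-2r)\lambda_1+S]\ge r[(p-2)\lambda_1+S]\ge 0$, with the trivial case $\lambda_1\ge 0$ split off, is essentially the paper's computation \eqref{Equ:r-index}. For \eqref{Equ:g-cone} the paper instead decomposes $G(n,r)=(n-r)\sigma_r+r(S-\sigma_r)$ as an explicit nonnegative combination of $G(n,s)$ plus a multiple of $\sum_i\lambda_i$ (display \eqref{Equ:comparison-g-r}), which tacitly also uses that $G(n,s)\ge 0$ with $s\le n/2$ forces $\sum_i\lambda_i\ge 0$; your reformulation $G(n,r)=r(n-r)(\alpha_r+\beta_r)$ together with the observation that both averages $\alpha_r$ (of the $r$ smallest) and $\beta_r$ (of the $n-r$ largest) are nondecreasing in $r$ avoids that algebraic bookkeeping entirely and gives monotonicity of the whole family at once. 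You also make explicit a point the paper leaves implicit, namely that for $r\le n/2$ the minimum over permutations in \eqref{Equ:GLW-cone} is attained on the $r$ smallest eigenvalues, which is exactly where the hypothesis $r\le n/2$ (i.e. $n-2r\ge 0$) enters; this is a worthwhile clarification, and incidentally your argument confirms that the inclusion in the paper's concluding sentence of the proof is stated with the sets reversed relative to \eqref{Equ:g-cone}, the statement itself being the correct direction.
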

\begin{proof} Assume without loss of generality that $\lambda_1\leq \lambda_2\leq\cdots\leq \lambda_n$. And let
$$
G(n, s) = (n-s)\sum_{i=1}^s\lambda_i + s\sum_{i=s+1}^n \lambda_i.
$$
We calculate, for $0 < s < r \leq \frac n2$, 
\begin{equation}\label{Equ:comparison-g-r}
\aligned
G(n, r) & = (n-r)\sum_{i=1}^r \lambda_i +r\sum_{i=r+1}^n\lambda_i\\
 & =  \frac {n-r}{n-s} G(n, s)  + \frac {n(r-s)}{n-s}\sum_{i=s+1}^n \lambda_i + (n-2r)\sum_{i=s+1}^r  \lambda_i\\
&\geq \frac {n-r}{n-s} G(n, s) +\frac {n(r-s)}{n-s}\sum_{i=s+1}^n \lambda_i + \frac {(n-2r)(r-s)}s \sum_{i=1}^s \lambda_i\\
&= \frac {n-r}{n-s} G(n, s)   + \frac {2r(r-s)}{n-s} \sum_{i=s+1}^n \lambda_i + \frac {(n-2r)(r-s)}{s(n-s)}G(n, s)\\
& \geq \frac {n-r}{n-s} G(n, s)   + \frac {2r(r-s)}n \sum_{i=1}^n \lambda_i + \frac {(n-2r)(r-s)}{s(n-s)}G(n, s).
\endaligned
\end{equation}
Hence we may conclude that $\mathcal{R}^{(r)}\subset \mathcal{R}^{(s)}$ when $0<s<r\leq \frac n2$. To verify \eqref{Equ:cone-comparison},
we may assume the smallest eigenvalue $\lambda_1 < 0$ for the sake of the argument, because otherwise $G(n, r)\geq 0$. We calculate
\begin{equation}\label{Equ:r-index}
\aligned
\frac 1r G(n, r) & = (n- 2r)\frac 1r \sum_{i=1}^r \lambda_i + \sum_{i=1}^n\lambda_i\\
\geq (n - 2r) \lambda_1 & +  \sum_{i=1}^n\lambda_i \geq (p-2)\lambda_1 + \sum_{i=1}^n\lambda_i
\endaligned
\end{equation}
if let $n - 2r  \leq p-2$ or equivalently $\frac {n-p}2 + 1\leq r \leq \frac n2$. This completes the proof of this lemma.
\end{proof}

We will see in Section \ref{Sec:topology} that \eqref{Equ:cone-comparison} has some topological
consequences from the B\"{o}chner formula for harmonic forms as observed in \cite{Nay97, GLW2005},
which further confirms that, more positive the curvature is, more vanishing theorems on the cohomology there are. 


\section{$p$-superharmonic functions and the Wolff potentials}\label{Sec:p-superharmonic-wolff}

In this section, we start with definitions and then collect some relevant facts in the potential theory for the study of $p$-superharmonic functions.
We also state and prove the key upper bound estimates of the Wolff potentials (cf. Theorem \ref{Thm:wolff potential upper bdd}) 
for our uses in the following sections.  


\subsection{$p$-superharmonic functions and the Wolff potentials}\label{Subsec:intr-p-wolff}

First we want to give definitions of $p$-harmonic and  $p$-superharmonic functions. In this section we always assume $1<p<n$ unless specified otherwise.
Let us recall again what is the $p$-Laplace operator
\begin{equation*}
\Delta_p u = \text{div}(|\nabla u|^{p-2}\nabla u).
\end{equation*}

\begin{definition}\label{Def:p-harmonic} (\cite[Definition 2.5]{Lind06})
We say that $u\in W^{1,p}_{\rm{loc}}(\Omega)$ is a weak solution of the $p$-harmonic equation in $\Omega$, if 
\begin{equation*}
\int \langle |\nabla u|^{p-2}\nabla u,\nabla \eta \rangle dx=0
\end{equation*}
for each $\eta\in C_0 ^{\infty}(\Omega)$. If, in addition, $u$ is continuous, we then say $u$ is a $p$-harmonic function.
\end{definition}

\begin{definition}\label{Def:p-superhar} (\cite[Definiton 5.1]{Lind06})
A function $v:\Omega\to (-\infty,\infty]$ is called $p$-superharmonic in $\Omega$, if 
\begin{itemize}
\item $v$ is lower semi-continuous in $\Omega$;
\item $v\not\equiv\infty$ in $\Omega$;
\item for each domain $D\subset\subset\Omega$ the comparison principle holds, that is, 
if $h\in C(\bar{D})$ is $p$-harmonic in $D$ and $h|_{\partial D}\le v|_{\partial D}$, then $h\le v$ in D. 
\end{itemize}
\end{definition}

As stated in \cite[Theorem 2.1]{KM92}, for $u$ to be a $p$-superharmonic function in $\Omega$, there is a nonnegative finite Radon 
measure $\mu$ in $\Omega$ such that

\begin{equation}\label{Equ:p-laplace-measure}
-\Delta_p u=\mu.
\end{equation}

For more about $p$-superharmonic functions and nonlinear potential theory, we refer to  \cite{KM94, HKM, HeKi, PV08, AH99, Lind06} and
references therein. The most important tool is the following Wolff potential to substitute the role of Riesz potential in linear potential theory

\begin{equation}\label{Equ:wolff-potential}
W^{\mu}_{1,p}(x,\, r)=\int_0^r(\frac {\mu(B(x,\, t))}{t^{n-p}})^{\frac 1 {p-1}}\frac {dt}{t}
\end{equation}
for any nonnegative Radon measure $\mu$ and $p\in (1, n]$. 
The fundamental estimate for the use of the Wolff potential in the study of $p$-superharmonic
functions is as follows:

\begin{theorem}\label{Thm:main-use-wolff} (\cite[Theorem 1.6]{KM94}) 
Suppose that $u$  is a nonnegative $p$-superharmonic function satisfying \eqref{Equ:p-laplace-measure} for a nonnegative 
finite Radon measure $\mu$ in $B(x, 3r)$. Then
\begin{equation}\label{Equ: main-use-wolff}
c_1 W^{\mu}_{1,p}(x, r)\le u(x)\le c_2(\inf_{B(x, r)}u+W^{\mu}_{1,p}(x, 2r))
\end{equation}
for some constants $c_1(n,p)$ and $c_2(n,p)$ for $p\in (1, n]$.
\end{theorem}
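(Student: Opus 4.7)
This is the classical two-sided Wolff potential estimate of Kilpel\"ainen--Mal\'y. The two inequalities require quite different arguments, and I would treat them in order.

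For the lower bound, my plan is to test the comparison principle against an explicit radial sub-barrier modeled on the fundamental solution $|y-x|^{-(n-p)/(p-1)}$ of $\Delta_p$. Fixing $t\in(0,r)$, I would let $v_t$ be the $p$-superharmonic function solving $-\Delta_p v_t = \mu$ (restricted to $B(x,t)$) with zero boundary values on $\partial B(x,2t)$, and exploit its explicit radial profile together with an integration-by-parts identity to obtain the pointwise bound
\[
v_t(x) \;\ge\; c\,\Bigl(\tfrac{\mu(B(x,t))}{t^{n-p}}\Bigr)^{1/(p-1)}.
\]
The comparison principle applied to $u - \inf_{\partial B(x,2t)} u$ then yields $u(x) \ge v_t(x) - \inf_{\partial B(x,2t)} u$. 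Summing along a dyadic sequence $t_k = 2^{-k} r$, the telescoping infima cancel against each other and the resulting right-hand side reassembles $W^{\mu}_{1,p}(x,r)$ up to a multiplicative constant.

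For the upper bound, which is the technical heart of the theorem, the plan is a dyadic iteration on $B_k = B(x, r_k)$ with $r_k = 2^{-k}\cdot 2r$. Set
\[
\mu_k \;=\; \Bigl(\tfrac{\mu(B_k)}{r_k^{n-p}}\Bigr)^{1/(p-1)}, \qquad W^{\mu}_{1,p}(x,2r) \sim \sum_{k\ge 0} \mu_k.
\]
I would construct an adapted ``essential value'' $m_k$ of $u$ on $B_k$ satisfying a contractive recursion
\[
m_k \;\le\; (1-\theta)\, m_{k+1} + C\, \mu_k
\]
with some $\theta\in(0,1)$ independent of $k$. This follows by comparing $u - m_{k+1}$ with its $p$-harmonic Dirichlet replacement on $B_k$, absorbing the defect into a Caccioppoli-type energy estimate whose inhomogeneous piece is exactly $\mu_k^{p-1}$, and then invoking the weak Harnack inequality for $p$-superharmonic functions to convert between integral averages and pointwise infima. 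Iterating the recursion telescopes $m_0$ (comparable to $\inf_{B(x,r)} u$) into $u(x)$ on one side, using lower semicontinuity from Definition~3.1.2 to pass to the pointwise value, and leaves the sum $\sum_k \mu_k \sim W^{\mu}_{1,p}(x,2r)$ on the other.

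The main obstacle, and the actual novelty in the Kilpel\"ainen--Mal\'y argument, is the choice of the essential value $m_k$ for which the contraction factor $1-\theta$ can be made strict uniformly in $k$. A naive choice such as $m_k = \inf_{B_k} u$ fails to produce any contraction at all, since the inhomogeneous piece $\mu_k$ can be larger than the differences $m_k - m_{k+1}$ one wishes to control. The resolution is to define $m_k$ implicitly through a stopping-time threshold that already incorporates a fraction of $\mu_k$, so that the Caccioppoli--weak-Harnack combination produces the contraction with a $\theta$ independent of the distribution of $\mu$. Once this contraction is secured, summing the geometric series, passing from essential to pointwise values via lower semicontinuity, and adjusting the radius from $2r$ back to $r$ on the left are essentially bookkeeping.
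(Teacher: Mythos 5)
This theorem is not proved in the paper at all: it is quoted verbatim from \cite[Theorem 1.6]{KM94} (building on \cite{KM92}), so there is no in-paper argument to compare your proposal against. What you have written is a roadmap of the original Kilpel\"ainen--Mal\'y proof, and at the level of strategy it is the right roadmap: a dyadic comparison argument for the lower bound, and for the upper bound a dyadic iteration with a contractive recursion whose inhomogeneous term reproduces the terms $\bigl(\mu(B_k)/r_k^{n-p}\bigr)^{1/(p-1)}$ of the Wolff potential.

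As a proof, however, it is incomplete, and you identify the gap yourself: the construction of the ``essential values'' $m_k$ and the proof of the contraction $m_k\le(1-\theta)m_{k+1}+C\mu_k$ is the entire technical content of \cite{KM94}, and your proposal describes what such a construction must achieve without actually producing it (the correct device there is a family of truncated test functions depending on a level parameter, combined with Caccioppoli-type and weak-Harnack estimates; getting $\theta$ uniform is delicate and is precisely why a naive $m_k=\inf_{B_k}u$ fails, as you note). Two smaller points on the lower bound: the auxiliary function $v_t$ solving $-\Delta_p v_t=\mu\llcorner B(x,t)$ with zero boundary values is \emph{not} radial, since $\mu$ need not be, so one cannot ``exploit its explicit radial profile''; instead one compares $v_t$ from below with an explicit radial barrier built from the fundamental solution $|y-x|^{-(n-p)/(p-1)}$ (or quotes the corresponding estimate from \cite{KM92}), and the comparison-principle step should read $u(x)\ge v_t(x)+\inf_{\partial B(x,2t)}u$ (plus, not minus), after which the dyadic summation and the lower semicontinuity of $u$ at $x$ give $c_1W^{\mu}_{1,p}(x,r)\le u(x)$. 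Also, comparison for the measure-data problem requires a word of justification, e.g.\ via the approximating Dirichlet problems used in \cite{KM92}. Since the authors themselves simply cite this result, the efficient course for your write-up is to do the same, or, if you want a self-contained account, to carry out the stopping-time construction of $m_k$ in full rather than describing it.
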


It is often useful to have the following existence result in order to fully use the above estimates (cf. \cite[Theorem 1.6]{KM94}).

\begin{lemma}\label{Lem:existence-p-sh} (\cite[Theorem 2.4]{KM92})
Suppose $\Omega\subset\mathbb{R}^n$ is a bounded domain and $\mu$ is a nonnegative finite Radon measure. 
There is a nonnegative $p$-superharmonic function $u(x)$ satisfying \eqref{Equ:p-laplace-measure}
and $\min\{u,\lambda\} \in W^{1,p}_0(\Omega)$ for any $\lambda >0$.
\end{lemma}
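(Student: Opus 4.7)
The plan is to construct $u$ as a limit of solutions to regularized Dirichlet problems, so the role of the measure $\mu$ is replaced by smooth data for which classical variational theory applies. First I would approximate $\mu$ by a sequence of nonnegative measures $\mu_{k} = f_{k}\,dx$ with $f_{k}\in L^{\infty}(\Omega)$, obtained by mollification, so that $\mu_{k}\to\mu$ weakly as Radon measures and $\mu_{k}(\Omega)\le\mu(\Omega)$. For each $k$, standard monotone-operator theory (Browder--Minty applied to the strictly monotone, coercive operator $-\Delta_{p}:W_{0}^{1,p}(\Omega)\to W^{-1,p'}(\Omega)$) produces a unique weak solution $u_{k}\in W_{0}^{1,p}(\Omega)$ of $-\Delta_{p}u_{k} = f_{k}$, and the weak maximum principle gives $u_{k}\ge 0$.

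Next I would derive uniform bounds on the truncations $T_{\lambda}(u_{k}) := \min\{u_{k},\lambda\}$. Testing the equation against $T_{\lambda}(u_{k})\in W_{0}^{1,p}(\Omega)$ yields
\[
\int_{\Omega}|\nabla T_{\lambda}(u_{k})|^{p}\,dx \;=\; \int_{\Omega} f_{k}\,T_{\lambda}(u_{k})\,dx \;\le\; \lambda\,\mu(\Omega),
\]
so $\{T_{\lambda}(u_{k})\}_{k}$ is bounded in $W_{0}^{1,p}(\Omega)$ for every $\lambda>0$. A diagonal extraction together with Sobolev embedding gives a subsequence (not relabelled) with $u_{k}\to u$ a.e., $T_{\lambda}(u_{k})\rightharpoonup T_{\lambda}(u)$ weakly in $W_{0}^{1,p}(\Omega)$, and the bound passes to the limit by weak lower semicontinuity, yielding $T_{\lambda}(u)\in W_{0}^{1,p}(\Omega)$ for each $\lambda>0$.

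The serious step is passage to the limit in the nonlinear term $|\nabla u_{k}|^{p-2}\nabla u_{k}$, since the gradients $\nabla u_{k}$ need not be bounded in $L^{p}$ when the right-hand side is merely a finite measure. Here I would invoke the Boccardo--Gallou\"et technique, combined with the a.e.\ convergence of gradients of solutions with measure data (Boccardo--Murat, Dal Maso--Murat): a careful use of the test function $T_{\lambda}(u_{k}-T_{h}(u))$ and the monotonicity of $\xi\mapsto|\xi|^{p-2}\xi$ upgrades the weak convergence of truncations to strong convergence, which forces $\nabla u_{k}\to\nabla u$ almost everywhere. Combined with the Boccardo--Gallou\"et estimate $\|\nabla u_{k}\|_{L^{q}}\le C$ for every $q<\tfrac{n(p-1)}{n-1}$, Vitali's theorem then allows passage to the limit in the distributional formulation, giving $-\Delta_{p}u=\mu$ in $\mathcal{D}'(\Omega)$.

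Finally, I would verify the three conditions in Definition~\ref{Def:p-superhar}. Lower semicontinuity follows from the standard fact that a function satisfying $-\Delta_{p}u=\mu\ge 0$ distributionally admits a lower semicontinuous representative (its precise representative off a set of $p$-capacity zero); nonnegativity and the fact that $u\not\equiv\infty$ follow from $u_{k}\ge 0$ and the Wolff-potential lower/upper bound in Theorem~\ref{Thm:main-use-wolff} applied to the limit, since $\mu$ is finite; the comparison principle against $p$-harmonic functions on subdomains is a consequence of the monotonicity of $-\Delta_{p}$ combined with $-\Delta_{p}u\ge 0$. The main obstacle, as indicated, is the a.e.\ convergence of the gradients $\nabla u_{k}$, which is the heart of the measure-data theory for quasilinear equations and what makes the nonlinear limit possible.
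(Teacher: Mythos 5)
The paper itself offers no proof of this lemma; it is quoted directly from \cite[Theorem 2.4]{KM92}, and your scheme (mollified data $f_k$, truncation estimates, Boccardo--Gallou\"et bounds, a.e.\ convergence of the gradients) is essentially the approximation argument used there, so the overall architecture is the right one. The genuine gap is in your final paragraph, where you verify that the limit $u$ is $p$-superharmonic in the sense of Definition \ref{Def:p-superhar}. You assert that lower semicontinuity and the comparison principle follow from the distributional relation $-\Delta_p u=\mu\ge 0$ alone. But the limit $u$ is in general \emph{not} in $W^{1,p}_{\mathrm{loc}}(\Omega)$: only its truncations are, and its gradient is only in $L^q$ for $q<\tfrac{n(p-1)}{n-1}<p$. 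Hence $u$ is not a weak supersolution in the variational sense, you cannot test the equation with the truncated differences needed for a comparison argument, and the ``standard fact'' about lower semicontinuous representatives applies to $W^{1,p}_{\mathrm{loc}}$ supersolutions, not to distributional solutions at this low integrability. Serrin-type examples show that distributional solutions with gradients below the natural exponent can be genuinely pathological, so $p$-superharmonicity cannot be read off from the distributional equation satisfied by the limit; establishing it is in fact the main content of the cited theorem, not a routine afterthought.

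The repair uses an ingredient you already have but do not exploit: each approximant $u_k$ is a continuous nonnegative supersolution in $W^{1,p}_0(\Omega)$, hence genuinely $p$-superharmonic, and your compactness step gives $u_k\to u$ a.e.\ with $u$ finite a.e.\ (from the truncation and weak-type estimates). One then concludes by the closure properties of the class of nonnegative $p$-superharmonic functions --- for instance the compactness theorem asserting that a sequence of nonnegative $p$-superharmonic functions has a subsequence converging a.e.\ either to a $p$-superharmonic function or to $+\infty$ identically, or equivalently by passing the comparison principle for the $u_k$ through the a.e.\ limit and replacing $u$ by its lower semicontinuous regularization $\tilde u(x)=\operatorname{ess\,liminf}_{y\to x}u(y)$. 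This is how Kilpel\"ainen--Mal\'y obtain a bona fide $p$-superharmonic solution; with that substitution your argument closes, but as written the last step does not go through. (A minor related point: invoking Theorem \ref{Thm:main-use-wolff} to show $u\not\equiv\infty$ is circular before $p$-superharmonicity is established; finiteness a.e.\ should instead come from the uniform truncation estimates.)
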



\subsection{$p$-capacity and $p$-thinness}
The $p$-Laplace equation \eqref{Equ:p-laplace-measure} is naturally related to the energy functional $\int_\Omega |\nabla u|^p dx$.
Therefore the following $p$-capacity in terms of the ``energy" defined, for example, in \cite[Chapters 2 and 4]{HKM} and \cite[Section 3.1]{KM94}, 
replaces the capacity in terms of the ``charges" for studying Riesz potentials.

\begin{definition}\label{Def:p-capacity} (\cite[Section 3.1]{KM94}) 
For a compact subset $K$ of a domain $\Omega$ in $\mathbb{R}^n$, we define
\begin{equation}\label{Equ:p-capacity-K}
cap_p(K,\Omega)=\inf \{\int_{\Omega}|\nabla u|^p dx: \text{ $u\in C_0^{\infty}(\Omega)$ and $u\ge 1$ on $K$}\}.
\end{equation}
Then $p$-capacity for arbitrary subset $E$ of $\Omega$ is 
\begin{equation}\label{Equ:p-capacity}
cap_p(E,\Omega)=\inf_{
\text{open}\ G \supset E \text{ \& } G  \subset\Omega} \ \ \sup_{\text{compact} \ K \subset G} cap_p(K,\Omega).
\end{equation}
\end{definition}

We would like to mention some basic facts about the capacity $cap_p(E, \Omega)$ which are important for the arguments in this paper. Readers
are referred to \cite{HKM, KM94}.

\begin{lemma}\label{Lem:subadditivity-Lip}\quad

\begin{enumerate}
\item Countable sub-additivity: 
$$
cap_p(\bigcup_k E_k, \Omega) \leq \sum_k cap_p(E_k, \Omega)
$$
for a countable collection of subsets $\{E_k\}$ of $\Omega$.
\item Scaling property:
$$
cap_p(E_\lambda, \Omega_\lambda) = \lambda^{n-p} cap_p(E, \Omega)
$$
where $A_\lambda = \{\lambda x\in \mathbb{R}^n: x\in A\}$ is the scaled set of $A \subset\mathbb{R}^n$.
\item Lipschitz property: Let 
$$
\Phi:\mathbb{R}^n\to \mathbb{R}^n
$$
be a Lipschitz map, that is, there is a constant $C_0>0$ such that 
$$
|\Phi(x)-\Phi(y)| \leq C_0|x-y| \text{ for all $x, y\in \mathbb{R}^n$}.
$$
Then, there is a constant $C>0$ such that
$$
cap_p (\Phi(E), \mathbb{R}^n) \leq C cap_p(E, \mathbb{R}^n).
$$
\item Positivity:
$$
cap_p(\partial B(0, r), B(0, 2r)) = c(n, p) r^{n-p} >0.
$$
\end{enumerate}
\end{lemma}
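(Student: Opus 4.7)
For items (1) and (2), I would follow the standard arguments of nonlinear potential theory. For (1), the plan is to first use the outer regularity built into Definition \ref{Def:p-capacity} to approximate each $E_k$ from outside by an open set $G_k$ with $cap_p(G_k, \Omega) \leq cap_p(E_k, \Omega) + \varepsilon/2^k$; since any compact $K \subset \bigcup_k G_k$ is covered by finitely many $G_{k_1}, \ldots, G_{k_N}$, the problem reduces to sub-additivity for a finite union of compact sets. For finite unions I would use the equivalent $W^{1,p}_0$-formulation of $p$-capacity (where the admissible class is closed under pointwise maxima): if $u_i$ is near-optimal for $K_i$, then $u := \max_i u_i \in W^{1,p}_0(\Omega)$ is admissible for $\bigcup_i K_i$ and satisfies $|\nabla u|^p \leq \sum_i |\nabla u_i|^p$ almost everywhere. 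Item (2) is a direct change of variables $y = \lambda x$: if $u \in C_0^\infty(\Omega)$ is admissible for $E$, then $v(y) := u(y/\lambda) \in C_0^\infty(\Omega_\lambda)$ is admissible for $E_\lambda$, and $\int_{\Omega_\lambda}|\nabla v|^p\,dy = \lambda^{n-p}\int_\Omega |\nabla u|^p\,dx$; taking infima in both directions yields equality.

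For item (4) I would compute the extremal explicitly. Seeking a radial solution $u(x) = f(|x|)$ of $\Delta_p u = 0$ on the annulus $B(0, 2r) \setminus \bar B(0, r)$ with $f(r) = 1$ and $f(2r) = 0$, the $p$-Laplace equation reduces to the ODE $(\rho^{n-1}|f'|^{p-2}f')' = 0$, whose solutions for $p \in (1, n)$ take the form $f(\rho) = \alpha + \beta\,\rho^{(p-n)/(p-1)}$. Matching boundary data and integrating $|\nabla u|^p$ over the annulus yields the explicit value $c(n, p)\,r^{n-p}$ with $c(n, p) > 0$. Strict positivity can also be argued abstractly: a vanishing capacity would produce test functions of arbitrarily small $p$-energy with trace essentially equal to $1$ on $\partial B(0, r)$, contradicting the Sobolev trace inequality on spheres.

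The Lipschitz property (3) is the main obstacle. My plan is to combine (1), (2), and (4) through a covering argument. Starting from a near-optimal $u$ for a compact $E$, I would extract from the super-level set $\{u > 1/2\}$ a Whitney-type cover $\{B(x_i, r_i)\}$ of $E$ satisfying the weak-type bound
$$
\sum_i r_i^{n-p} \leq C \int_{\mathbb{R}^n} |\nabla u|^p\, dx \leq C\,(cap_p(E, \mathbb{R}^n) + \varepsilon).
$$
Since $\Phi$ is $C_0$-Lipschitz, $\Phi(E) \subset \bigcup_i B(\Phi(x_i), C_0 r_i)$; applying (1) and (4) then gives
$$
cap_p(\Phi(E), \mathbb{R}^n) \leq \sum_i cap_p\bigl(B(\Phi(x_i), C_0 r_i), \mathbb{R}^n\bigr) \leq c(n, p)\, C_0^{n-p}\sum_i r_i^{n-p} \leq C \cdot cap_p(E, \mathbb{R}^n).
$$
The crux is producing the initial covering with controlled $\sum_i r_i^{n-p}$, which amounts to the nontrivial comparison at Whitney scale between $p$-capacity and $(n-p)$-Hausdorff content; for this I would lean on the detailed treatment in \cite{HKM} rather than attempting a self-contained argument, since an honest pushforward of a test function is obstructed when $\Phi$ is merely Lipschitz and not bi-Lipschitz.
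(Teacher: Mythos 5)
Your treatments of (1), (2) and (4) are correct and essentially coincide with what the paper does, which is to quote the sources: countable subadditivity is \cite[Theorem 2.2]{HKM} (proved exactly by your outer-regularity plus finite-union/maximum argument), (2) is the change of variables you wrote, and (4) is the explicit radial computation of \cite[Example 2.12]{HKM}.

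The genuine gap is in your plan for (3). The ``weak-type'' covering you postulate, $\sum_i r_i^{n-p}\le C\int_{\mathbb{R}^n}|\nabla u|^p\,dx$ for a near-minimizer $u$ of $cap_p(E,\mathbb{R}^n)$, would, after taking the infimum over admissible $u$, give $\mathcal{H}^{n-p}_\infty(E)\le C\,cap_p(E,\mathbb{R}^n)$, i.e.\ a bound of the Hausdorff content at the \emph{critical} dimension $n-p$ by the $p$-capacity. That inequality is false: any compact set with $0<\mathcal{H}^{n-p}(E)<\infty$ has $cap_p(E,\mathbb{R}^n)=0$ (\cite[Theorem 2.27]{HKM}) while its $(n-p)$-content is positive; for $p=2$, $n=3$ a line segment already does it (zero Newtonian capacity, positive length). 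The comparison theorems in \cite{HKM, AH99} that you hope to lean on only dominate contents of dimension strictly larger than $n-p$ (or contents with a logarithmic gauge) by the capacity; at the critical exponent the valid inequality is the reverse one, $cap_p(E)\le C\,\mathcal{H}^{n-p}_\infty(E)$, which follows from your own (1) and (4) and is useless here. So the Whitney-scale cover is not merely ``nontrivial'' --- it does not exist in general, and the reduction of (3) to (1)+(2)+(4) collapses. You are right that a pushforward of test functions is obstructed when $\Phi$ is not injective (note that the radial projection used in Theorem \ref{Thm:segment-escape} is Lipschitz but not bi-Lipschitz), and this is precisely why the paper, beyond asserting that (3) follows from Definition \ref{Def:p-capacity}, cites \cite[Theorem 5.2.1]{AH99}: there the quasi-invariance of capacity under Lipschitz (distance-nonincreasing, after rescaling) maps is proved through the dual characterization of the capacity in terms of measures and their (nonlinear/Riesz) potentials --- a contraction-principle argument in which the kernel estimates survive a distance-decreasing map regardless of injectivity --- together with the equivalence of that capacity with the variational one. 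To make your write-up correct, replace the covering scheme for (3) by that measure-theoretic argument (or an explicit appeal to it).
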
 
\begin{proof} The proof and references are here: (1) is from \cite[Theorem 2.2]{HKM}; (2) is easily verified; 
(3) is easily been verified based on Definition \ref{Def:p-capacity} 
(see also \cite[Theorem 5.2.1]{AH99}); (4) has been explicitly computed, for instance, \cite[Eample 2.12]{HKM}.
\end{proof}

The following is also very important to us.

\begin{lemma}\label{Lem:cap-estimate} (\cite[Lemma 3.9]{KM94}) 
Suppose that $u\in W_0^{1,p}(\Omega)$ is $p$-superharmonic functions. Then for $\lambda>0$ it holds
\begin{equation}\label{Equ:cap-estimate}
\lambda^{p-1}cap_p(\{x\in\Omega:u(x)>\lambda\},\Omega)\le\mu(\Omega).
\end{equation}
\end{lemma}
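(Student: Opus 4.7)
The approach is the textbook truncation argument. Write $E_\lambda = \{x \in \Omega : u(x) > \lambda\}$, which is open because $u$ is lower semi-continuous, and let $v = \min\{u,\lambda\}$. Since $u \in W^{1,p}_0(\Omega)$, truncation yields $v \in W^{1,p}_0(\Omega) \cap L^\infty(\Omega)$ with $0 \le v \le \lambda$ and $\nabla v = \nabla u \cdot \mathbf{1}_{\{u < \lambda\}}$ almost everywhere. The rescaled function $\eta := v/\lambda$ then lies in $W^{1,p}_0(\Omega)$ and equals $1$ identically on $E_\lambda$, making it the natural admissible function for estimating $\mathrm{cap}_p(E_\lambda, \Omega)$.

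First I would plug $v$ as a nonnegative test function into the weak form of $-\Delta_p u = \mu$ from \eqref{Equ:p-laplace-measure}. A density argument (approximating $v$ by $C_0^\infty(\Omega)$ functions in $W^{1,p}$) shows $v$ is admissible, and then
\begin{equation*}
\int_{\{u < \lambda\}} |\nabla u|^p \, dx \;=\; \int_\Omega \langle |\nabla u|^{p-2}\nabla u, \nabla v\rangle \, dx \;=\; \int_\Omega v \, d\mu \;\le\; \lambda \, \mu(\Omega).
\end{equation*}
Next, because $E_\lambda$ is open, I would exhaust it by compact sets $K \subset E_\lambda$ and smooth $\eta$ slightly (so it stays $\ge 1$ on $K$) to produce admissible $C_0^\infty(\Omega)$ competitors for $\mathrm{cap}_p(K,\Omega)$. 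Passing to the infimum and using Definition \ref{Def:p-capacity} together with the previous display yields
\begin{equation*}
\mathrm{cap}_p(E_\lambda, \Omega) \;\le\; \int_\Omega |\nabla \eta|^p \, dx \;=\; \lambda^{-p} \int_{\{u<\lambda\}} |\nabla u|^p \, dx \;\le\; \lambda^{1-p}\mu(\Omega),
\end{equation*}
which rearranges to the claimed inequality.

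The main technical point is justifying that the truncation $v$ is a legitimate test function in the measure-valued equation $-\Delta_p u = \mu$, since Definition \ref{Def:p-harmonic} only gives the weak formulation against $C_0^\infty$ test functions. This is handled by density of $C_0^\infty(\Omega)$ in $W^{1,p}_0(\Omega)$ combined with $|\nabla u|^{p-2}\nabla u \in L^{p/(p-1)}(\Omega)$ (because $u \in W^{1,p}_0$), so that the left-hand side is continuous under $W^{1,p}$-approximation of $v$, while boundedness of $v$ together with finiteness of $\mu$ makes the right-hand side continuous as well. The parallel smoothing step for $\eta$ — producing $C_0^\infty$ competitors that are $\ge 1$ on $K$ and whose $L^p$ norm of the gradient converges to $\int |\nabla \eta|^p$ — is standard and does not alter the leading constant. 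No other obstacle arises.
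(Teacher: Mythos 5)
The paper offers no proof of this lemma (it is quoted verbatim from \cite{KM94}), and your truncation argument is the standard proof of that result, so the overall route is the right one: test \eqref{Equ:p-laplace-measure} with $v=\min\{u,\lambda\}$ to get the Caccioppoli-type bound $\int_{\{u<\lambda\}}|\nabla u|^{p}\,dx\le\lambda\,\mu(\Omega)$, and then use $v/\lambda$, which equals $1$ on the open set $\{u>\lambda\}$, as an (essentially) admissible function for $cap_p(\{u>\lambda\},\Omega)$ in the sense of Definition \ref{Def:p-capacity}. The final smoothing step (passing from the $W^{1,p}_0$ competitor $v/\lambda$ to $C_0^\infty$ competitors that are $\ge 1$ on compact subsets of the open superlevel set) is indeed standard, being the usual equivalence between the $C_0^\infty$ and $W^{1,p}_0$ formulations of variational capacity.

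The one step whose justification would fail as written is your continuity claim for the measure side: $W^{1,p}$-convergence $\varphi_j\to v$ plus a uniform $L^\infty$ bound does \emph{not} imply $\int_\Omega\varphi_j\,d\mu\to\int_\Omega v\,d\mu$ for a general finite Radon measure, because the values of $\varphi_j$ on Lebesgue-null sets charged by $\mu$ (e.g.\ a Dirac mass, which is possible since $p<n$) are not controlled by Sobolev convergence. Two standard repairs: (i) you only need an upper bound, so choose the smooth approximations with $\varphi_j\le\lambda$ pointwise (truncate at level $\lambda$ and remollify, which preserves the bound and the compact support); then $\int_\Omega\varphi_j\,d\mu\le\lambda\,\mu(\Omega)$ for every $j$, and you pass to the limit only on the left-hand side, which is continuous because $|\nabla u|^{p-1}\in L^{p/(p-1)}(\Omega)$; or (ii) observe that, since $u\in W^{1,p}_0(\Omega)$, the nonnegative measure $\mu=-\Delta_p u$ belongs to the dual of $W^{1,p}_0(\Omega)$ and hence charges no set of $p$-capacity zero, so a subsequence of $\varphi_j$ converges quasieverywhere to the quasicontinuous representative of $v$ and one can apply dominated convergence with respect to $\mu$. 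With either repair the argument is complete; note also that the key inequality $\int_\Omega v\,d\mu\le\lambda\,\mu(\Omega)$ uses only $v\le\lambda$ and $\mu\ge 0$, so the sign of $u$ is immaterial, even though in this paper the lemma is only applied to the nonnegative solutions produced by Lemma \ref{Lem:existence-p-sh}.
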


The notions of thinness in the potential theory are vitally important. 
The readers are referred to \cite{Br40, Br44, AM72, AH73, HeKi,KM94, MQ21} for detailed discussions and references therein. 
The discussion in \cite[Section 2.3]{MQ21} for the two different notions of thinness, which turns out to be equivalent in dimension 2 due to 
\cite[Theorem 2]{Br44}, is relevant. To deal with the singular behavior of $p$-superharmonic functions, like \cite[Definition 3.1]{MQ21} (see also
\cite[Section 2.5]{Mi96}), we 
propose a thinness that is less restrictive than that by the Wiener type integral when $p\in (2, n)$.
Let 
\begin{eqnarray*}
\omega_i(x_0)&=&\{x\in\mathbb R^n:2^{-i-1}\le|x-x_0|\le2^{-i}\};\\
\Omega_i(x_0)&=&\{x\in\mathbb R^n:2^{-i-2}\le|x-x_0|\le2^{-i+1}\}.
\end{eqnarray*}

\begin{definition}\label{Def:quasi-p-thin}
A set $E\subset\mathbb R^n$ is said to be $p$-thin for $p\in (1, n)$ at $x_0\in\mathbb R^n$ if 
\begin{equation}\label{Equ:quasi-p-thin}
\sum_{i=1}^\infty \frac{cap_p(E\cap\omega_i(x_0),\Omega_i(x_0))}{cap_p(\partial B(x_0, 2^{-i}), B(x_0, 2^{-i+1}))}  <+\infty.
\end{equation}
Meanwhile a set $E$ is said to be $n$-thin at $x_0\in \mathbb{R}^n$ if
\begin{equation}\label{Equ:quasi-n-thin}
\sum_{i=1}^\infty i^{n-1}\, cap_n(E\cap\omega_i (x_0), \Omega_i(x_0)) < +\infty.
\end{equation} 
\end{definition}

To compare, for the thinness used in earlier works, for instance, \cite[Theorem 1.3]{KM94}, which has been 
confirmed to be equivalent to the Cartan property (cf. \cite[(1.4)]{KM94}), we recall that

\begin{equation*}
W_p(E,x_0)=\int_0^1(\frac{cap_p(B(x_0,t)\cap E,B(x_0,2t))}{cap_p(B(x_0,t),B(x_0,2t))})^{\frac{1}{p-1}}\frac{dt}{t}<+\infty.
\end{equation*}
or equivalently (cf. \cite[Lemma 12.10]{HKM})
\begin{equation}\label{Equ:old-p-thin}
\sum_{i=1}^\infty(\frac{cap_p(B(x_0, 2^{-i})\cap E, B(x_0, 2^{-i+1}))}{cap_p(B(x_0, 2^{-i}), B(x_0, 2^{-i+1}))})^\frac 1 {p-1}<+\infty.
\end{equation}
Clearly, a set $E\subset\Omega\subset\mathbb{R}^n$ is $p$-thin at $x_0\in\Omega$ by Definition \ref{Def:quasi-p-thin}
if it is $p$-thin at $x_0$ by \eqref{Equ:old-p-thin} when $p\in (2, n)$ (cf. \cite[Remark 5.1 in Chapter 2]{Mi96}). 
The most important fact about $p$-thin to us in this paper is the following (cf. \cite[Theorem 2.1]{MQ22})

\begin{theorem}\label{Thm:segment-escape} 
Let $E$ be a subset in the Euclidean space $R^n$ and $x_0\in R^n$ be a point. Suppose that $E$ is $p$-thin at the 
point $x_0$ for $p\in (1, n]$. Then there is a ray from $x_0$ that avoids $E$ at least within some small ball at $x_0$.
\end{theorem}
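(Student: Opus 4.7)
My plan is to argue by contrapositive. With $r_i = 2^{-i}$, introduce the angular shadows $K_i := \{(x - x_0)/|x - x_0| : x \in E \cap \omega_i(x_0)\} \subset S^{n-1}$; the failure of the conclusion is precisely the assertion that $\bigcup_{i \ge N} K_i = S^{n-1}$ for every $N \ge 1$. The strategy is to bound each $cap_p(K_i, B(0, 2))$ from above by the $i$-th summand in Definition \ref{Def:quasi-p-thin}, then apply countable sub-additivity combined with the positivity of $cap_p(S^{n-1}, B(0, 2))$ to force that series to diverge.

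For the per-scale bound, I would use the radial retraction $\Psi_i(x) = x_0 + r_i(x - x_0)/|x - x_0|$, which extends to a Lipschitz map $\mathbb{R}^n \to \mathbb{R}^n$ with Lipschitz constant bounded independently of $i$, and which carries $E \cap \omega_i$ onto a translated-and-scaled copy of $K_i$ on the sphere $\partial B(x_0, r_i)$. Combining Lemma \ref{Lem:subadditivity-Lip}(3) (Lipschitz property), Lemma \ref{Lem:subadditivity-Lip}(2) (scaling), and the standard comparability between absolute and relative $p$-capacity on compact sets of uniformly controlled geometry, one obtains a universal constant $C > 0$ with
\begin{equation*}
cap_p\bigl(K_i,\, B(0, 2)\bigr) \;\le\; C\, cap_p\bigl(S^{n-1},\, B(0, 2)\bigr) \cdot \frac{cap_p\bigl(E \cap \omega_i(x_0),\, \Omega_i(x_0)\bigr)}{cap_p\bigl(\partial B(x_0, r_i),\, B(x_0, 2r_i)\bigr)}.
\end{equation*}

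Invoking countable sub-additivity (Lemma \ref{Lem:subadditivity-Lip}(1)) together with the covering $\bigcup_{i \ge N} K_i = S^{n-1}$ and the displayed inequality yields
\begin{equation*}
cap_p(S^{n-1}, B(0, 2)) \;\le\; \sum_{i \ge N} cap_p(K_i, B(0, 2)) \;\le\; C\, cap_p(S^{n-1}, B(0, 2)) \sum_{i \ge N} \frac{cap_p(E \cap \omega_i, \Omega_i)}{cap_p(\partial B(x_0, r_i), B(x_0, 2r_i))}
\end{equation*}
for every $N$. Since $cap_p(S^{n-1}, B(0, 2)) > 0$ by Lemma \ref{Lem:subadditivity-Lip}(4), the tail of the $p$-thin series is bounded below by $1/C$ for every $N$, contradicting convergence in Definition \ref{Def:quasi-p-thin} for $p \in (1, n)$. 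The borderline $p = n$ case follows by the same scheme after noting that the $i^{n-1}$ weight in \eqref{Equ:quasi-n-thin} is stronger than the unweighted condition $\sum cap_n(E \cap \omega_i, \Omega_i) < \infty$, which is what the above argument requires at $p = n$ (where the exponent in (2) is $n - p = 0$ and scaling is trivial). The main technical hurdle is promoting the Lipschitz property stated for absolute $p$-capacity in Lemma \ref{Lem:subadditivity-Lip}(3) to the relative capacity estimate displayed above; this is routine thanks to the uniform bi-Lipschitz geometry of the rescaled model annulus $\{y \in \mathbb{R}^n : 1/4 \le |y| \le 2\}$.
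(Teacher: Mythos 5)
Your scheme is the same as the paper's (which defers to the proof of \cite[Theorem 2.1]{MQ22}): rescale each dyadic annulus to a fixed model annulus, project radially onto the unit sphere, and play countable sub-additivity of the shadows' capacities against the positive capacity of the sphere, so that for large $N$ the shadows cannot cover $S^{n-1}$ and an escaping ray exists. For $p\in(1,n)$ your execution is essentially correct: by Lemma \ref{Lem:subadditivity-Lip}(2) and (4) your displayed per-scale bound is equivalent to $cap_p\bigl(K_i, B(0,2)\bigr)\le C\, cap_p\bigl(2^i(E\cap\omega_i-x_0), \{1/4<|y|<2\}\bigr)$, and this does follow from the Lipschitz property (3) combined with the unit-scale comparability of $cap_p(\cdot,\mathbb{R}^n)$ with the relative capacities; one direction of that comparability is trivial monotonicity, the other is a cutoff-plus-Sobolev argument, and both are routine precisely because $p<n$.

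The gap is at the endpoint $p=n$, which the statement includes. There your mechanism does not merely need polishing, it fails: $cap_n(K,\mathbb{R}^n)=0$ for every bounded set (use logarithmic cutoffs $u=\log(R/|x|)/\log(R/R_0)$ and let $R\to\infty$), so Lemma \ref{Lem:subadditivity-Lip}(3) carries no information and the ``standard comparability between absolute and relative $p$-capacity'' you invoke simply does not exist at $p=n$. You also cannot fall back on composing test functions with an inverse map, since the radial retraction is not injective, hence not bi-Lipschitz. Thus the per-scale bound $cap_n\bigl(K_i,B(0,2)\bigr)\le C\,cap_n\bigl(E\cap\omega_i,\Omega_i\bigr)$ at the endpoint needs a genuinely separate argument for relative $n$-capacity; this is exactly the content the paper imports from \cite{MQ22, MQ21}, and the extra weight $i^{n-1}$ in \eqref{Equ:quasi-n-thin}, which your sketch treats as a freebie (``stronger than the unweighted condition''), is a warning that $p=n$ is not just the $p<n$ argument with trivial scaling. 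Either supply the unit-scale projection estimate for relative $n$-capacity directly (for instance via test-function or curve-family/modulus arguments), or restrict your proof to $p\in(1,n)$ and quote the cited results for the borderline case.
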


\begin{proof} In the light of properties in Lemma \ref{Lem:subadditivity-Lip} the proof goes exactly as the proof of \cite[Theorem 2.1]{MQ22}.
Applying the rescaling from $\Omega_i$ to $\Omega_{-2}$ and the projection from $\Omega_{-2}$ to the unit sphere, it is then easily seen 
that the $p$-capacity of the union of all the projections of rescaled $\{E\cap \omega_i\}_{i=k}^\infty$ is smaller than the capacity of the unit sphere in 
$\Omega_0$ when $k$ is sufficiently large, which guarantees that there are rays from $x_0$ that avoids $E$ in some small ball. 
\end{proof}


\subsection{The key estimates of the Wolff potential}
In this subsection, we want to state and prove the following estimates on the Wolff potentials, which is essential for our 
investigation of singularities of $p$-superharmonic functions in Section \ref{Sec:Hausdorff-dimension}.

\begin{theorem}\label{Thm:wolff potential upper bdd}
Suppose $\mu$ is a nonnegative finite Radon measure in $\Omega$. Assume that, for a point $x_{0}\in\Omega$ and some number $m\in (0, n-p)$,
\begin{equation}\label{Equ:lebesgue-p}
\mu(B(x_{0}, t))\le Ct^{m}
\end{equation}
for all $t\in (0, 3r_0)$ with $B(x_0, 3r_0)\subset \Omega$. Then, for $\varepsilon>0$, there are a subset $E\subset\Omega$, which
is $p$-thin at $x_{0}$, and a constant $C>0$ such that
\begin{equation}\label{Equ:wolff-upper}
W_{1,p}^{\mu}(x, \, r_0)\le C|x - x_0|^{-\frac{n-p-m +\varepsilon}{p-1}} \text{ for all $x\in\Omega\setminus E$}
\end{equation}
for $p\in [2, n)$.
\end{theorem}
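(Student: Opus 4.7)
My plan is to decompose the Wolff integrand dyadically in $t$ at scales around $|x-x_0|$, and construct a $p$-superharmonic function on the local annulus $\Omega_i$ so that the level-set capacity estimate from Lemma~\ref{Lem:cap-estimate} lands directly in the domain needed for Definition~\ref{Def:quasi-p-thin}. For $x\in\omega_i(x_0)$, set $\alpha = (n-p-m+\varepsilon)/(p-1)$ and write
\[
W^\mu_{1,p}(x, r_0) = I_1 + I_2 + I_3 := \int_0^{r_i} + \int_{r_i}^{2^{-i+3}} + \int_{2^{-i+3}}^{r_0},
\]
with $r_i$ a small constant multiple of $2^{-i}$ chosen so $B(x, 3r_i) \subset \Omega_i$ (possible since $\mathop{\rm dist}(\omega_i, \partial\Omega_i)\gtrsim 2^{-i}$). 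For $I_3$, $B(x,t) \subset B(x_0, 2t)$ together with the growth hypothesis give $\mu(B(x,t))\le C(2t)^m$, so integration yields $I_3 \le C|x-x_0|^{-(n-p-m)/(p-1)}$. For $I_2$, using $\mu(B(x,t))\le\mu(B(x_0,2^{-i+4}))\le C2^{-im}$ and integration over the bounded range of $t$ produces the same bound. Since $(n-p-m)/(p-1) < \alpha$ and $|x-x_0| < 1$, both pieces are pointwise strictly sharper than the target $|x-x_0|^{-\alpha}$ and incur no exceptional set.

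For $I_1$, the choice of $r_i$ guarantees $B(x,t)\subset\Omega_i$ for every $t\le r_i$, hence $\mu(B(x,t)) = \mu|_{\Omega_i}(B(x,t))$ throughout. I would apply Lemma~\ref{Lem:existence-p-sh} to $\mu|_{\Omega_i}$ on $\Omega_i$ to produce a nonnegative $p$-superharmonic $v_i^*$ on $\Omega_i$ with $-\Delta_p v_i^* = \mu|_{\Omega_i}$ and $\min(v_i^*, \lambda)\in W^{1,p}_0(\Omega_i)$. Theorem~\ref{Thm:main-use-wolff} then gives $c_1 I_1 \le v_i^*(x)$. Setting $\lambda_i = K\cdot 2^{i\alpha}$, Lemma~\ref{Lem:cap-estimate} applied in $\Omega_i$ yields
\[
cap_p(\{v_i^* > \lambda_i\}, \Omega_i) \le \frac{\mu(\Omega_i)}{\lambda_i^{p-1}} \le CK^{-(p-1)}\cdot 2^{-im - i\alpha(p-1)} = CK^{-(p-1)}\cdot 2^{-i(n-p+\varepsilon)},
\]
using $\mu(\Omega_i)\le C2^{-im}$ from the growth hypothesis and the identity $\alpha(p-1) = n-p-m+\varepsilon$.

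Define $B_i = \{x\in\omega_i: v_i^*(x)>\lambda_i\}$ and $E = \bigcup_i B_i$. For $K$ sufficiently large, the combined three-part estimate gives $W^\mu_{1,p}(x, r_0)\le C|x-x_0|^{-\alpha}$ for every $x\in\Omega\setminus E$. The reference capacity is $cap_p(\partial B(x_0, 2^{-i}), B(x_0, 2^{-i+1})) = c(n,p)\cdot 2^{-i(n-p)}$ by Lemma~\ref{Lem:subadditivity-Lip}(4), so
\[
\sum_i \frac{cap_p(B_i, \Omega_i)}{cap_p(\partial B(x_0, 2^{-i}), B(x_0, 2^{-i+1}))} \le CK^{-(p-1)}\sum_i 2^{-i\varepsilon} <\infty,
\]
verifying the $p$-thinness of $E$ in the sense of Definition~\ref{Def:quasi-p-thin}. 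The main obstacle is the geometric scale-matching in the three-part decomposition: the choice of $r_i$ so that $B(x, 3r_i)\subset\Omega_i$ in Theorem~\ref{Thm:main-use-wolff}, together with the sharp verification that $I_2$ and $I_3$ yield exponents in $|x-x_0|$ strictly better than $\alpha$, so that the $\varepsilon$-loss in $\alpha$ is localized entirely to the summable factor $2^{-i\varepsilon}$ in the thinness series. Constructing $v_i^*$ on $\Omega_i$ itself (not on the global $\Omega$) is what lets us avoid any local-to-global capacity comparison, since the naive monotonicity $cap_p(B_i,\Omega) \le cap_p(B_i,\Omega_i)$ runs the wrong way.
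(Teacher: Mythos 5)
Your proposal is correct and follows essentially the same route as the paper's proof: split the Wolff integral dyadically, handle the scales $t\gtrsim |x-x_0|$ directly from the growth hypothesis $\mu(B(x_0,t))\le Ct^m$, and for the small-scale piece solve $-\Delta_p v=\mu$ on the annulus $\Omega_i$ (Lemma \ref{Lem:existence-p-sh}), transfer via the lower Wolff bound of Theorem \ref{Thm:main-use-wolff}, apply the level-set capacity estimate of Lemma \ref{Lem:cap-estimate} with $\lambda_i\sim 2^{i\alpha}$, and sum $2^{-i\varepsilon}$ to get $p$-thinness from Definition \ref{Def:quasi-p-thin} and Lemma \ref{Lem:subadditivity-Lip}(2),(4). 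The only (harmless) difference is bookkeeping: you use a three-term split with the inner radius $r_i\sim 2^{-i}$ chosen so that $B(x,3r_i)\subset\Omega_i$, which is in fact slightly more careful than the paper's choice $2^{-i-2}$ when invoking Theorem \ref{Thm:main-use-wolff}.
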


\begin{proof} The proof of this theorem is in principle similar to the proof of \cite[Theorem 2.2]{MQ22}. For $x\in \omega_i$, let us write
\begin{equation}\label{Equ:4-terms}
\aligned
W_{1,p}^{\mu}& (x,r_{0}) =\int_{0}^{r_{0}}(\frac{\mu(B(x,t))}{t^{n-p}})^{\frac{1}{p-1}}\frac{dt}{t}\\
 & = \left\{\int_{2^{-i_0}}^{r_0} + \int_{2^{-i-1}}^{2^{-i_0}} +\int_{2^{-i-2}}^{2^{-i-1}} + \int_{0}^{2^{-i-1}}  \right\}
 (\frac{\mu(B(x,t))}{t^{n-p}})^{\frac{1}{p-1}}\frac{dt}{t}.
\endaligned
\end{equation}
for some large $i_0$ to be fixed by $r_0$. For the first term in the right side of \eqref{Equ:4-terms}
\begin{align*}
\int_{2^{-i_0}}^{r_0} & (\frac{\mu(B(x,t))}{t^{n-p}})^{\frac{1}{p-1}}\frac{dt}{t} \leq \frac {p-1}{n-p}\mu(\Omega)^\frac 1{p-1} (2^{i_0})^\frac {n-p}{p-1}\\
& \leq C(n, p, \varepsilon, i_0) |x - x_0|^{ -\frac{n-p -m +\varepsilon}{p-1}}.
\end{align*}
For the second term in the right side of \eqref{Equ:4-terms}
\begin{align*}
\int_{2^{-i-1}}^{2^{-i_0}} &   (\frac{\mu(B(x,t))}{t^{n-p}})^{\frac{1}{p-1}} \frac{dt}{t} 
 =   \sum_{k=-i -1}^{-i_0 - 1} 
\int_{2^k}^{2^{k+1}} (\frac{\mu(B(x,t))}{t^{n-p}})^{\frac{1}{p-1}}\frac{dt}{t} \\
\leq & \sum_{k= -i-1}^{-i_0} \frac{p-1}{n-p} (\mu(B(x_0, 2^{k+2}))^\frac 1{p-1} (2^k)^\frac {n-p}{p-1}\\
\leq & c(n, p) (\sum_{l=0}^{i -i_0} ((2^\frac{n-p-m}{p-1})^{-l})|x-x_0|^{-\frac {n-p-m}{p-1}} \\
\leq & C(n, p, m, \varepsilon)  |x-x_0|^{-\frac {n-p-m +\varepsilon}{p-1}}.
\end{align*}
For the third term in the right side of \eqref{Equ:4-terms}
\begin{align*}
\int_{2^{-i-2}}^{2^{-i-1}}  & (\frac{\mu(B(x,t))}{t^{n-p}})^{\frac{1}{p-1}}\frac{dt}{t} \leq 
\frac {p-1}{n-p} \mu(B(x_0, 2^{-i+1}))^\frac 1{p-1} (2^{-i-2})^{-\frac {n-p}{p-1}}\\
& \leq C(n, p, \varepsilon) |x-x_0|^{- \frac {n-p-m+\varepsilon}{p-1}}.
\end{align*}
For the fourth term in the right side of \eqref{Equ:4-terms}, we now switch the gear and estimate $cap_p(E_i, \Omega_i)$, where
$$
E_i = \{x\in \omega_i: W_{1, p}^\mu(x, 2^{-i-2}) \geq C |x-x_0|^{-\frac{n-p-m+\varepsilon}{p-1}}\}.
$$
For this purpose, we let $u_i$ be the nonnegative $p$-superharmonic function that solves 
$$
\left\{\aligned -\Delta_p u_i & = \mu \text{ in $\Omega_i$}\\
u_i & = 0 \text{ on $\partial \Omega_i$}. \endaligned\right.
$$
The existence of such $u_i$ is given by Lemma \ref{Lem:existence-p-sh} 
(cf. \cite[Theorem 2.4]{KM92}). Applying Theorem \ref{Thm:main-use-wolff} (cf. \cite[Theorem 1.6]{KM94}), we have
$$
c(n, p)W_{1, p}^\mu (x, 2^{-i-2}) \leq u_i(x) \text{ for $x\in \omega_i$.}
$$
Therefore, in the light of Lemma \ref{Lem:cap-estimate} (cf. \cite[Lemma 3.9]{KM94}), we have
$$
cap_p(E_i, \Omega_i) \leq c(n, p) \mu(\Omega_i) |x-x_0|^{n-p-m+\varepsilon} \leq c(n, p) 2^{-(n-p)i} 2^{-\varepsilon i}.
$$ 
Thus, if let $E = \bigcup_i E_i$, we have
$$
\sum_i 2^{(n-p)i} cap_p (E\bigcap\omega_i, \Omega_i) \leq c(n, p) \sum_i 2^{-\varepsilon i} < \infty,
$$
which implies that $E$ is $p$-thin according to Definition \ref{Def:quasi-p-thin}. Here we have used the facts 
that are stated as (2) and (4) in Lemma \ref{Lem:subadditivity-Lip}. So the proof is completed.
\end{proof}


\section{Applications in conformal geometry}\label{Sec:Hausdorff-dimension}

In this section we will use the $p$-Laplace equation \eqref{Equ:p-Laplace-s} and Theorem \ref{Thm:wolff potential upper bdd} to derive the
consequence of the curvature condition $A^{(p)}\geq 0$ and prove Theorem \ref{Thm:main-thm-1-intr}. Let us first restate it as follows:

\begin{theorem}\label{Thm:main-geometric-thm-1}
Suppose that $S$ is a compact subset of a bounded domain $\Omega\subset\mathbb{R}^n$.  And suppose that there is a metric $\bar g$ on 
$\Omega\setminus S$ such that 
\begin{itemize}
\item it is conformal to the Euclidean metric $g_{\mathbb{E}}$;
\item it is geodesically complete near $S$. 
\end{itemize}
Assume that $A^{(p)} [\bar g]\geq 0$ for some $p\in [2, n)$.
Then
$$
dim_{\mathcal{H}}(S) \leq \frac {n-p}2
$$
for $p\in [2, n)$.
\end{theorem}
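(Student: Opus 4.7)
The plan is to argue by contradiction, first reinterpreting $u$ as a $p$-superharmonic function on all of $\Omega$ with an associated Radon measure $\mu$, then invoking the Wolff potential estimate of Theorem \ref{Thm:wolff potential upper bdd} to control $u$ pointwise outside a $p$-thin set, and finally using Theorem \ref{Thm:segment-escape} to exhibit a Euclidean ray from a point of $S$ along which the $\bar g$-length is finite, contradicting the geodesic completeness of $\bar g$ near $S$. Since $A^{(p)}[g_{\mathbb{E}}]\equiv 0$ on Euclidean space, equation \eqref{Equ:p-Laplace-s} collapses on $\Omega\setminus S$ to
$$-\Delta_p u \;=\; \frac{n-p}{2(p-1)}\,S^{(p)}(u)[\bar g]\, u^q \;\ge\; 0,$$
thanks to $A^{(p)}[\bar g]\ge 0$ and $u>0$. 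Thus $u$ is a positive $p$-superharmonic function on $\Omega\setminus S$, and geodesic completeness of $\bar g$ near $S$ forces $u(x)\to\infty$ as $x\to S$ (otherwise a locally bounded $u$ near a point of $S$ would produce a finite-length curve into $S$). Setting $u\equiv\infty$ on $S$ yields a lower-semicontinuous extension which is $p$-superharmonic on $\Omega$ by a standard removable-singularity result in nonlinear potential theory, and I denote the associated nonnegative finite Radon measure by $\mu$, so that $-\Delta_p u = \mu$ on $\Omega$.

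Assume, for contradiction, that $\dim_{\mathcal H}(S) > (n-p)/2$. Fix $m\in\bigl((n-p)/2,\,\min(\dim_{\mathcal H}(S),\,n-p)\bigr)$ and a small $\varepsilon>0$ with $m-\varepsilon>(n-p)/2$. By Frostman's lemma there is a nonnegative Radon measure $\nu$ supported on $S$ with $\nu(S)>0$ and $\nu(B(x,t))\le C t^m$ for all $x,t$; choose $x_0\in\operatorname{supp}(\nu)\subset S$. To invoke Theorem \ref{Thm:wolff potential upper bdd} on $\mu$, the technical heart of the argument is to show that $\mu$ itself inherits the Frostman growth $\mu(B(x_0,t))\le C' t^m$ for small $t$. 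The plan is to let $v$ be the $p$-superharmonic solution of $-\Delta_p v=\nu$ provided by Lemma \ref{Lem:existence-p-sh}, use Theorem \ref{Thm:main-use-wolff} to see that $v\equiv\infty$ on a $\nu$-full subset of $S$, and then apply the Perron-type comparison principle from Definition \ref{Def:p-superhar} (together with a cutoff exhaustion of $\Omega\setminus S$) to compare $u$ and $v$ near $S$. Passing this pointwise comparison back to the defining measures then yields the desired Frostman-type growth on $\mu$.

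With $\mu(B(x_0,t))\le C' t^m$ in hand, Theorem \ref{Thm:wolff potential upper bdd} produces a subset $E\subset\Omega$ which is $p$-thin at $x_0$ and satisfies
$$W^\mu_{1,p}(x,r_0)\;\le\; C\,|x-x_0|^{-(n-p-m+\varepsilon)/(p-1)}\qquad\text{for all }x\in\Omega\setminus E.$$
Combining with the upper bound in Theorem \ref{Thm:main-use-wolff} yields $u(x)\le C|x-x_0|^{-\beta}$ off $E$ near $x_0$, where $\beta:=(n-p-m+\varepsilon)/(p-1)<(n-p)/(2(p-1))$ by the choice of $m,\varepsilon$. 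Theorem \ref{Thm:segment-escape} supplies a Euclidean ray $\gamma(t)=x_0+t\omega$ avoiding $E$ inside some ball $B(x_0,r_1)$, along which the $\bar g$-length is
$$\int_0^{r_1} u(\gamma(t))^{2(p-1)/(n-p)}\,dt \;\le\; C\int_0^{r_1} t^{-2\beta(p-1)/(n-p)}\,dt \;<\;+\infty,$$
since $2\beta(p-1)/(n-p)<1$. A finite $\bar g$-length of a ray terminating at $x_0\in S$ contradicts geodesic completeness of $\bar g$ near $S$, finishing the proof.

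The main obstacle is the transfer of Frostman growth from $\nu$ (determined by the Hausdorff dimension of $S$) to $\mu$ (determined by the conformal geometry) in the second paragraph: the two measures are a priori unrelated, and the bridge requires a careful application of the comparison principle for $p$-superharmonic functions that are infinite on $S$, presumably via a Perron-type envelope argument combined with Lemma \ref{Lem:existence-p-sh} and the Kilpel\"{a}inen--Mal\'{y} two-sided Wolff bounds of Theorem \ref{Thm:main-use-wolff}. Once this step is secured, the rest is a clean combination of the Wolff upper bound, the escape-ray property of $p$-thin sets, and the completeness hypothesis.
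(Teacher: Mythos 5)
Your overall architecture is the same as the paper's: extend $u$ across $S$ as a $p$-superharmonic function with $-\Delta_p u=\mu$ a nonnegative finite Radon measure, obtain a growth bound $\mu(B(x_0,t))\le Ct^{m}$ at some $x_0\in S$ under the contradiction hypothesis, apply Theorem \ref{Thm:wolff potential upper bdd} together with Theorem \ref{Thm:main-use-wolff} to bound $u$ off a $p$-thin set $E$, and use Theorem \ref{Thm:segment-escape} plus the length computation to contradict completeness. The last three stages are fine and essentially identical to the paper. The problem is the stage you yourself flag as the ``technical heart'': producing the growth bound for $\mu$.

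Your route — Frostman's lemma giving a measure $\nu$ on $S$ with $\nu(B(x,t))\le Ct^{m}$, then ``transferring'' this growth to $\mu$ by comparing $u$ with the solution $v$ of $-\Delta_p v=\nu$ — is not a proof and, as sketched, does not work. First, the intermediate claim that $v\equiv\infty$ on a $\nu$-full subset of $S$ is false in general: an upper growth bound on $\nu$ does not force $W^{\nu}_{1,p}$ to diverge anywhere (already for $p=2$ a measure can have bounded Newtonian potential). Second, even granting some pointwise comparison of $u$ and $v$ near $S$, pointwise inequalities between $p$-superharmonic functions do not transfer to inequalities between their Riesz measures on small balls; and the direction you need — an \emph{upper} bound on $\mu(B(x_0,t))$ — would require an upper bound on $u$ near $x_0\in S$ via the lower Wolff estimate, which is exactly what is unavailable since $u\to\infty$ on $S$. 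The paper sidesteps all of this: under the assumption $\dim_{\mathcal H}(S)>\frac{n-p}{2}$ one has $\mathcal H_{s}(S)=\infty$ for $s=\frac{n-p+3\varepsilon}{2}$, and since $\mu$ is a \emph{finite} Radon measure, a decomposition/density result (\cite[Proposition 1.4]{Kp19}, cf.\ \cite[Lemma 2.7]{MQ22}) directly yields a point $x_0\in S$ with $\mu(B(x_0,t))\le Ct^{s}$ for all small $t$ — no auxiliary Frostman measure and no comparison argument at all. Until your transfer step is replaced by an argument of this kind, the proof is incomplete. A secondary remark: your Step 1 is also thinner than what is needed — the blow-up $u\to\infty$ at $S$ is not a direct consequence of completeness alone but is deduced from the nonnegativity of the scalar curvature (\cite[Lemma 3.1]{MQ021}, \cite[Proposition 8.1]{CHY}), and the extension of $-\Delta_p u$ as a finite measure across $S$ is not a generic removable-singularity statement; the paper proves it with the level-set cutoff argument of \cite[Lemma 3.2]{MQ22}.
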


\begin{proof} Let $\bar g = u^\frac {4(p-1)}{n-p}\, g_{\mathbb{E}}$ for $u>0$ in $\Omega\setminus S$. 
First, in the light of \eqref{Equ:p-Laplace-s} and the assumption on the curvature
$A^{(p)}[\bar g]$, we know
\begin{equation}\label{Equ:geometry}
-\Delta_p u = \frac {n-p}{2(p-1)} |\nabla u|^{p-2}A^{(p)}(\nabla u) u^q = f \geq 0 \text{ in $\Omega\setminus S$.}
\end{equation}
{\bf Step 1}: We claim that $u$ is actually a $p$-superharmonic function such that
\begin{equation}\label{Equ:claim}
-\Delta_p u = \mu \text{ in $\Omega$}
\end{equation}
for some nonnegative finite Radon measure $\mu$ on $\Omega$. Due to \cite[Lemma 3.1]{MQ021} (see also \cite[Proposition 8.1]{CHY}), 
we know
$$
u(x)\to \infty \text{ as $x\to S$}
$$
since the scalar curvature of the conformal metric $\bar g$ is nonnegative from $A^{(p)}[\bar g]\geq 0$. In fact, from \cite[Theorem 3.1]{MQ22},
we also know 
$$
dim_{\mathcal{H}} (S)\leq \frac {n-2}2.
$$
To derive \eqref{Equ:claim} for some nonnegative finite Radon measure $\mu$,  
as in the proof of \cite[Lemma 3.2]{MQ22} (see also \cite{DHM, MQ021}), we use the cut-off function based on the level sets of $u$. Let
$$
\alpha_s (t) = \left\{ \aligned t \quad & t\in [0, s];\\
\text{increasing} \quad & t\in [s, 10s];\\
2s \quad & t\in [10s, \infty) \endaligned\right.
$$
be a function satisfying $\alpha_s' = \frac d{dt}\alpha_s \in [0, 1]$ and $\alpha_s'' = \frac {d^2}{dt^2}\alpha_s \leq 0$. 
The proof from here goes exactly the same line by line of the proof of \cite[Lemma 3.2]{MQ22} with little changes. And the non-negativity of the 
finite Radon measure $(-\Delta_p u)$ is readily verified due to \eqref{Equ:geometry}. 
\\

\noindent
{\bf Step 2:} Next we apply Theorem \ref{Thm:wolff potential upper bdd} to derive the Hausdorff dimension estimate. The approach runs 
similarly to the proof of \cite[Theorem 3.1]{MQ22}. Assume otherwise 
$$
dim_{\mathcal{H}}(S) > \frac {n-p}2.
$$
Therefore, there is $\varepsilon > 0$ such that $\mathcal{H}_{\frac {n-p + 3\varepsilon}2}(S) = \infty$. Applying \cite[Proposition 1.4]{Kp19}
(see also \cite[Lemma 2.7]{MQ22}), there is a point $x_0\in S$ such that
$$
\mu(B(x_0, t)) \leq Ct^{\frac {n-p+ 3\varepsilon}2}
$$
for all $t\in (0, r_0)$ and $B(x_0, 3r_0)\subset\Omega$. In the light of Theorem \ref{Thm:wolff potential upper bdd}, we have
\begin{equation}\label{Equ:application}
W_{1,p}^{\mu}(x, \, r_0)\le C|x - x_0|^{-\frac{n-p -\varepsilon}{2(p-1)}} \text{ for all $x\in\Omega\setminus E$}
\end{equation}
for some $E$ that is $p$-thin at $x_0$. To get the upper bound for the $p$-superharmonic function $u$, we have, from
 Theorem \ref{Thm:main-use-wolff} (see also \cite[Theorem 1.6]{KM94}), we have
$$
u(x)  \leq C (\inf_{B(x, r_0)} u(y) + W^\mu_{1, p}(x, 2r_0))
$$
for $x\in B(x_0, r_0)$. In the light of \eqref{Equ:application}, we have
\begin{equation}\label{Equ:u-upper-bdd}
u(x) \leq C\frac 1{|x-x_0|^\frac {n-p - \varepsilon}{2(p-1)}}
\end{equation}
for $x\in B(x_0, r_0)\setminus (E\cup\{x_0\})$ for a subset $E$ that is $p$-thin at $x_0$.
\\

\noindent
{\bf Step 3:} In this final step we use the geodesic completeness of the conformal metric $\bar g = u^\frac {4(p-1)}{n-p} g_{\mathbb{E}}$
and the fact that there is a ray $\Gamma$ from $x_0$ that avoids $E$ in some small ball around $x_0$ because $E$ is $p$-thin at $x_0$ according
to Theorem \ref{Thm:segment-escape}.  On this ray $\Gamma$, we calculate the length with respect to the conformal metric $\bar g$
$$
\int_0^{l_0} u^\frac {2(p-1)}{n-p} dr \leq C \int_0^{l_0} \frac 1{r^\frac {n-p- \varepsilon}{n-p}} dr < \infty
$$ 
for some $l_0>0$ based on \eqref{Equ:u-upper-bdd}, which contradicts with the geodesic completeness of $\bar g$. Therefore 
$$
dim_{\mathcal{H}}(S) \leq \frac {n-p}2.
$$
Thus the proof is complete.
\end{proof}


\section{Vanishing of Topology }\label{Sec:topology}

In this section we discuss topological consequences for locally conformally flat manifolds to have $A^{(p)}\geq 0$. On homotopy groups, we take the
approach based on the work in Section \ref{Sec:Hausdorff-dimension} and \cite{SY1988}. On cohomology spaces, we derive some consequences 
from the comparison of positive cones with the help from \cite{Nay97, GLW2005}.


\subsection{On homotopy groups} In this subsection we follow the set-up from \cite{SY1988} and use the development map to unfold
a compact locally conformally flat manifold in $\mathbb{S}^n$. We always assume the dimension is greater than $2$. Let us first restate Theorem \ref{Thm:main-geometric-thm-1} for subsets 
in sphere $\mathbb{S}^n$.

\begin{theorem}\label{Thm:main-geometric-thm-2} Suppose that $S$ is a closed subset of the sphere $\mathbb{S}^n$.  
And suppose that there is a metric $\bar g$ on $\mathbb{S}^n\setminus S$ that is conformal to the standard round metric 
$g_{\mathbb{S}}$. Assume that it is geodesically complete near $S$ and that $A^{(p)} [\bar g]\geq 0$ for some $p\in [2, n)$.
Then
$$
dim_{\mathcal{H}}(S) \leq \frac {n-p}2.
$$
\end{theorem}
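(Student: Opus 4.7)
The plan is to reduce Theorem \ref{Thm:main-geometric-thm-2} to the already-established Euclidean version Theorem \ref{Thm:main-geometric-thm-1} via stereographic projection. Since the hypothesis requires a metric on $\mathbb{S}^n\setminus S$, we must have $S\neq \mathbb{S}^n$, so we may pick a point $p_0\in \mathbb{S}^n\setminus S$ and consider the stereographic projection $\sigma:\mathbb{S}^n\setminus\{p_0\}\to\mathbb{R}^n$. Because $S$ is closed in $\mathbb{S}^n$ and $p_0\notin S$, the image $\sigma(S)$ is a compact subset of $\mathbb{R}^n$. We then enclose it in a large open ball $\Omega\subset\mathbb{R}^n$ so that $\sigma(S)\subset\Omega$ is compact in a bounded domain, which is the set-up required by Theorem \ref{Thm:main-geometric-thm-1}.

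Next I would verify that each hypothesis of Theorem \ref{Thm:main-geometric-thm-1} is inherited by the pushed-forward metric $\tilde g = (\sigma^{-1})^*\bar g$ on $\Omega\setminus \sigma(S)$. First, $\sigma$ is a conformal diffeomorphism, so writing $g_{\mathbb{S}} = \lambda^2 \sigma^*g_{\mathbb{E}}$ for a smooth positive $\lambda$ and $\bar g = u^{4(p-1)/(n-p)} g_{\mathbb{S}}$ on $\mathbb{S}^n\setminus S$, the pushed metric on $\Omega\setminus \sigma(S)$ takes the form
\begin{equation*}
\tilde g \;=\; \bigl( (u\circ \sigma^{-1})\,(\lambda\circ\sigma^{-1})^{(n-p)/(2(p-1))}\bigr)^{4(p-1)/(n-p)} g_{\mathbb{E}},
\end{equation*}
which is conformal to the Euclidean metric. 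Second, the intermediate Schouten tensor is a diffeomorphism-invariant of the metric, so $A^{(p)}[\tilde g] = (\sigma^{-1})^*A^{(p)}[\bar g]\geq 0$. Third, geodesic completeness transfers because $\sigma$ is a smooth diffeomorphism on a neighborhood of $S$ in $\mathbb{S}^n\setminus\{p_0\}$, so lengths of $\tilde g$-curves in $\Omega\setminus\sigma(S)$ agree with lengths of $\bar g$-curves in $\mathbb{S}^n\setminus S$; any sequence in $\Omega\setminus\sigma(S)$ approaching $\sigma(S)$ corresponds to one in $\mathbb{S}^n\setminus S$ approaching $S$, and $\bar g$-completeness near $S$ forces $\tilde g$-completeness near $\sigma(S)$.

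Having set up the hypotheses, I would then apply Theorem \ref{Thm:main-geometric-thm-1} to obtain $\dim_{\mathcal{H}}(\sigma(S))\leq (n-p)/2$. Finally, because $S$ is contained in the compact set $\mathbb{S}^n\setminus B_\varepsilon(p_0)$ for some $\varepsilon>0$ and $\sigma$ restricted to this set is smooth with bounded Jacobian (hence bi-Lipschitz), Hausdorff dimension is preserved: $\dim_{\mathcal{H}}(S) = \dim_{\mathcal{H}}(\sigma(S))\leq (n-p)/2$, which is the desired conclusion.

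There is no serious obstacle here, as this is essentially a translation of the Euclidean result by conformal change of chart. The only points to handle with modest care are verifying that $\tilde g$ is indeed conformal to $g_{\mathbb{E}}$ with a positive smooth conformal factor on $\Omega\setminus \sigma(S)$ (which follows from conformality of $\sigma$), and checking that geodesic completeness of $\bar g$ near $S$ pushes forward to geodesic completeness of $\tilde g$ near $\sigma(S)$ (which follows from $\sigma$ being a diffeomorphism between neighborhoods bounded away from $p_0$). Both verifications are routine given the conformal invariance of the setup.
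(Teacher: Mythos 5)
Your proposal is correct and follows essentially the same route as the paper: pick a point off $S$, use stereographic projection to land $S$ in a bounded domain of $\mathbb{R}^n$, and invoke Theorem \ref{Thm:main-geometric-thm-1}, with the conformality, curvature positivity, completeness, and Hausdorff-dimension transfers being the routine verifications the paper leaves implicit. No issues to report.
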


\begin{proof} Pick a point $x_\infty$ outside $S$ in $\mathbb{S}^n$ and take the stereographic projection from $\mathbb{S}^n\setminus x_\infty$ to 
$\mathbb{R}^n$. Let $\Omega$ be a bounded and open neighborhood of $S$ in $\mathbb{R}^n$. Now this theorem simply follows from 
Theorem \ref{Thm:main-geometric-thm-1}. 
\end{proof}

Next let us consider compact locally conformally flat manifolds $(M^n, g)$ with $A^{(p)}\geq 0$ for $p\in [2, n)$. By \cite{SY1988}, we know
the development map, the conformal immersion from the universal covering $\tilde M^n$ into $\mathbb{S}^n$,

\hskip2.0in\begin{tikzpicture}
    xscale=2.5, yscale 0.5
    \node (x) at (0,0) {$\tilde M^n$};
    \node (y) at (3, 0) {$\mathbb{S}^n$};
    \node (z) at (0, -1.5) {$M^n$};
    \draw[->] (x) edge node[above] {$\Phi$} (y);
    \draw[->] (x) edge node [left]{$\pi$} (z);
\end{tikzpicture}

\noindent
is injective when the scalar curvature $R[g]$ is nonnegative. Then the image $\Omega = \Phi(\tilde M^n)$ in $\mathbb{S}^n$ comes with
the metric $(\Phi^{-1})^*(\tilde g) = \bar g$ that is conformal to the standard round metric $g_{\mathbb{S}}$, where $\tilde g$ is the lift of
$g$ on $\tilde M^n$.
\\

\begin{corollary}\label{Cor:vanishing-homotopy}
Suppose that $(M^n, g)$ is a compact locally conformally flat manifold with $A^{(p)}\geq 0$ for some $p\in (2, n)$. Then
the homotopy groups $\pi_k(M^n)$ vanishes for all $ 1< k < \frac {n +p -2}2$. 
\end{corollary}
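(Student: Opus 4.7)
The plan is to follow the Schoen--Yau strategy from \cite{SY1988}: use the development map to realize the universal cover $\widetilde M^n$ as an open subset of $\mathbb{S}^n$, apply Theorem~\ref{Thm:main-geometric-thm-2} to bound the Hausdorff dimension of the complement, and then use a general-position argument to kill the low-dimensional homotopy groups.

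First, I would check that the development map $\Phi:\widetilde M^n\to \mathbb{S}^n$ is injective. Tracing $A^{(p)}\geq 0$ gives $\mathrm{tr}(A^{(p)})=\tfrac{n+p-2}{2(n-1)}R\geq 0$, so $R[g]\geq 0$ everywhere; as recorded in the paper from \cite{SY1988}, this forces $\Phi$ to be injective. Setting $\Omega:=\Phi(\widetilde M^n)\subset \mathbb{S}^n$ and pushing forward the lifted metric $\tilde g$ via $\Phi$, I obtain a metric $\bar g$ on $\Omega$ conformal to the standard round metric. Compactness of $M^n$ makes $\tilde g$ complete, so $\bar g$ is a complete Riemannian metric on $\Omega$ and in particular geodesically complete near the complement $S:=\mathbb{S}^n\setminus \Omega$; and since $\Phi$ is an isometry onto $(\Omega,\bar g)$, the curvature hypothesis $A^{(p)}[\bar g]\geq 0$ transfers from $A^{(p)}[g]\geq 0$. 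Theorem~\ref{Thm:main-geometric-thm-2} then yields $\dim_{\mathcal{H}}(S)\leq \tfrac{n-p}{2}$.

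The homotopy conclusion follows from general position. Fix $k$ with $1<k<\tfrac{n+p-2}{2}$ and a continuous map $\sigma:S^k\to \Omega$. Since $k<n$ we have $\pi_k(\mathbb{S}^n)=0$, so $\sigma$ extends to $\bar\sigma:D^{k+1}\to \mathbb{S}^n$. The inequality $k<\tfrac{n+p-2}{2}$ rearranges to $(k+1)+\tfrac{n-p}{2}<n$, so a Sard-type perturbation of $\bar\sigma$ rel.\ boundary (which already lies in $\Omega$) yields an extension whose image avoids $S$ and therefore lies in $\Omega$. Hence $\pi_k(\Omega)=0$; via the diffeomorphism $\Phi$ this gives $\pi_k(\widetilde M^n)=0$; and because $\widetilde M^n$ is the universal cover of $M^n$ and $k\geq 2$, $\pi_k(M^n)=\pi_k(\widetilde M^n)=0$, proving the corollary.

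The chief obstacle is making the transversality perturbation precise for a subset $S$ whose only a priori size control is Hausdorff dimensional. The argument is standard (Hausdorff dimension dominates topological dimension, and a $\tfrac{n-p}{2}$-dimensional set is avoided by almost every small translate of a $(k+1)$-disk when $(k+1)+\tfrac{n-p}{2}<n$), but it must be implemented carefully, exactly as in the final step of \cite{SY1988}. A minor degenerate subcase also deserves a brief comment: if $R\equiv 0$ on $M^n$, then the trace identity forces $A^{(p)}\equiv 0$, which with $p>2$ forces $A\equiv 0$ and hence $(M^n,g)$ is flat, so $\widetilde M^n=\mathbb{R}^n$ and every higher homotopy group vanishes a fortiori.
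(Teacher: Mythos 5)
Your proposal is correct and follows essentially the same route as the paper: trace $A^{(p)}\geq 0$ to get $R\geq 0$, invoke the Schoen--Yau injectivity of the development map, apply Theorem~\ref{Thm:main-geometric-thm-2} to bound $\dim_{\mathcal{H}}(\mathbb{S}^n\setminus\Omega)\leq\frac{n-p}{2}$, and conclude $\pi_k(\Omega)=0$ by general position and $\pi_k(M^n)=0$ by covering space theory. The extra details you supply (completeness of $\bar g$, the exponent arithmetic $(k+1)+\frac{n-p}{2}<n$, and the degenerate case $R\equiv 0$) only make explicit what the paper leaves to \cite{SY1988}.
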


\begin{proof} The proof goes exactly like that in \cite{SY1988}. First, by Theorem \ref{Thm:main-geometric-thm-2} in the above, we conclude
$$
dim_{\mathcal{H}}(\mathbb{S}^n\setminus\Omega) \leq \frac {n-p}2.
$$
This is to say that the co-dimension of $\mathbb{S}^n\setminus\Omega$ is at least $\frac {n+p}2$, which implies any map 
$$
h: \mathbb{S}^k\to \Omega\subset\mathbb{S}^n
$$ 
is homotopic to a trivial map without across $\mathbb{S}^n\setminus\Omega$ if $0 < k < \frac {n+p}2 -1$. Therefore the
homotopy groups $\pi_k(\Omega)$ vanishes and therefore the homotopy groups $\pi_k(M^n)$ vanishes 
when $1 < k < \frac {n+p}2-1$ by the property of covering maps. The proof is complete.
\end{proof}

\subsection{On Betti numbers} In this subsection we draw corollaries from the vanishing theorem in \cite{Nay97, GLW2005} and the
comparison of positive cones in Section \ref{Subsec:cone-comparison}. Let us recall from \cite{Nay97, GLW2005} 
the B\"{o}chner formula for harmonic forms
\begin{equation}\label{Equ:bochner}
\Delta \omega = \nabla^*\nabla \omega + \mathcal{R}\omega
\end{equation}
where $\Delta = d^*d + dd^*$ is the Hodge Laplace, $\nabla^*\nabla$ is on the other hand the rough Laplace, and the curvature
operator
$$
\mathcal{R}\omega =  \sum_{i, j=1}^n \omega_i \wedge \iota_{e_j}(R(e_i, e_j)\omega)
$$
where $\{e_k\}$ is a basis, $\{\omega_k\}$ is the dual basis, and $R(e_i, e_j)$ is the Riemann curvature operator on $r$-forms. 
Recall
$$
R_{ijkl} = W_{ijkl} + A_{ik}g_{jl} - A_{il}g_{jk} + A_{jl}g_{ik} - A_{jk}g_{il}.
$$
Particularly, when the metric is locally conformally flat, i.e. $W=0$, one has
$$
R_{ijkl} = A_{ik}g_{jl} - A_{il}g_{jk} + A_{jl}g_{ik} - A_{jk}g_{il}.
$$
So, for an orthonormal basis $\{e_j\}$ under which $A$ is diagonalized with eigenvalues $\{\lambda_j\}$, we have
$$
R_{ijkl} = \lambda_i(\delta_{ik}\delta_{jl} - \delta_{il}\delta_{jk}) + \lambda_j(\delta_{jl}\delta_{ik} - \delta_{jk}\delta_{il})
$$
and therefore 
$$
\mathcal{R} \omega = ((n-r)\sum_{i=1}^r\lambda_i + r \sum_{i=r+1}^n\lambda_i)\omega
$$
for $\omega = \omega_1\wedge\omega_2\cdots\wedge\omega_r$. Thus
$$
<\mathcal{R}\omega, \omega> \geq 0 \text{ when } \lambda(A)\in \mathcal{R}^{(r)}
$$
for any $r$-form $\omega$ according to \eqref{Equ:GLW-cone}. Similar to the proof of 
\cite[Proposition 2.1]{GLW2005} , using Lemma \ref{Lem:cone-comparison} in Section \ref{Subsec:cone-comparison}, we have the following
restatement of Theorem \ref{Thm:vanishing-betti-intr}:

\begin{theorem}\label{Thm:glw-consequence} Suppose that $(M^n, g)$ is a compact locally conformally flat manifold with $A^{(p)}\geq 0$
for some $p\in [2, n)$. And suppose that the scalar curvature is positive at some points. Then, for $ \frac {n-p}2 +1\leq r\leq \frac {n+p}2-1$,
the cohomology spaces $H^r(M^n, R)=\{0\}$, unless $(M^n, g)$ is isometric to a quotient of $\mathbb{H}^r\times \mathbb{S}^{n-r}$ for 
$r = \frac {n-p}2+1 < \frac n2$.
\end{theorem}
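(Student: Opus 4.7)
The plan is to run the classical Bochner argument for harmonic $r$-forms, using the cone comparison \eqref{Equ:cone-comparison} from Lemma \ref{Lem:cone-comparison} to convert the pointwise hypothesis $A^{(p)}\geq 0$ into nonnegativity of the curvature operator $\mathcal{R}$ on $\Lambda^r$ for the relevant range. Fix $r$ with $\frac{n-p}{2}+1 \leq r \leq \frac{n}{2}$ and let $\omega$ be harmonic. Pairing \eqref{Equ:bochner} with $\omega$ and integrating on the compact $M^n$ yields
\begin{equation*}
\int_{M^n} \bigl(|\nabla\omega|^2 + \langle\mathcal{R}\omega,\omega\rangle\bigr)\,dv_g = 0.
\end{equation*}
Local conformal flatness reduces $\langle\mathcal{R}\omega,\omega\rangle$ at each point to the symmetric-cone expression in \eqref{Equ:GLW-cone}; combined with $\lambda(A)\in\mathcal{A}^{(p)}$ and \eqref{Equ:cone-comparison}, this gives $\langle\mathcal{R}\omega,\omega\rangle\geq 0$ pointwise. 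Hence $\nabla\omega\equiv 0$ and $\langle\mathcal{R}\omega,\omega\rangle\equiv 0$. Poincar\'e duality $H^r\cong H^{n-r}$ then extends the vanishing from $\frac{n-p}{2}+1\leq r\leq \frac{n}{2}$ to the symmetric range $\frac{n-p}{2}+1\leq r\leq \frac{n+p}{2}-1$.

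Suppose now that $H^r(M^n,\mathbb{R})\neq\{0\}$. A nontrivial parallel harmonic $\omega$ exists, so $|\omega|$ is a positive constant and, by pointwise positive semidefiniteness, $\mathcal{R}(x)\omega(x)=0$ everywhere. At a point $x_0$ where the scalar curvature is positive, I would trace the inequality chain \eqref{Equ:comparison-g-r}--\eqref{Equ:r-index}: forcing the outer equality $G(n,r)(x_0)=0$ while $\sum_i\lambda_i(x_0)>0$ collapses every intermediate step. This forces the bottom eigenvalues of $A$ to coincide ($\lambda_1=\cdots=\lambda_r$), the integer constraint $n-2r=p-2$ (i.e.\ $r=\frac{n-p}{2}+1$), and the two eigenvalue clusters to occur in the prescribed ratio of \eqref{Equ:example}. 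Comparing with other permutations in the minimum defining $\mathcal{R}^{(r)}$ then forces the remaining $n-r$ eigenvalues to coincide as well, giving an orthogonal splitting of $TM$ into a negative $r$-dimensional and a positive $(n-r)$-dimensional eigendistribution of $A$.

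The two eigendistributions are parallel and hence integrable, and the de Rham decomposition theorem applied to the universal cover $(\tilde M^n,\tilde g)$ produces an isometric splitting $\tilde M^n = M_1\times M_2$ into simply connected factors of dimensions $r$ and $n-r$. Local conformal flatness together with constant Schouten eigenvalues forces each factor to be a space form, and the ratio from \eqref{Equ:example} identifies them as $\mathbb{H}^r$ and $\mathbb{S}^{n-r}$; descent via the deck group gives the claimed quotient. The hard part will be this final rigidity step: squeezing enough from the equality case in \eqref{Equ:comparison-g-r}--\eqref{Equ:r-index} and from parallelism of $\omega$ to pin down \emph{both} the product structure on $\tilde M^n$ and the specific space-form factors (not merely a Riemannian product), and then verifying that the deck transformation group preserves that decomposition so that the quotient structure descends to $(M^n,g)$.
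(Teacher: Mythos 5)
The first half of your argument is sound and is essentially the paper's proof: Bochner plus Lemma \ref{Lem:cone-comparison} gives $\langle\mathcal{R}\omega,\omega\rangle\geq 0$ pointwise, harmonic forms are parallel and lie in $\ker\mathcal{R}$, and tracing equality through \eqref{Equ:comparison-g-r}--\eqref{Equ:r-index} at a point where $R>0$ correctly forces $\lambda_1=\cdots=\lambda_r=\lambda_{\min}<0$, $(p-2)\lambda_{\min}+\mathrm{tr}A=0$ and $n-2r=p-2$, so only the degrees $r=\frac{n-p}2+1$ and $n-r$ can carry cohomology. The genuine gap is in your rigidity step. The assertion that ``comparing with other permutations in the minimum defining $\mathcal{R}^{(r)}$ forces the remaining $n-r$ eigenvalues to coincide'' is false as a pointwise statement: take $n=6$, $p=4$, $r=2$ and Schouten eigenvalues $\lambda=(-1,-1,\tfrac12,\tfrac12,1,2)$; then the eigenvalues of $A^{(p)}$ are $(0,0,3,3,4,6)\geq 0$, $R>0$, and $G(6,2)=4(-2)+2(4)=0$, yet $\lambda_3,\dots,\lambda_6$ are pairwise distinct. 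What the equality case actually yields at such a point is only that the bottom eigenvalue of $A$ has multiplicity at least $r$ and that $\omega$ lies in $\Lambda^r$ of that eigenspace; there is no two-cluster eigenstructure, hence no ``negative $r$-dimensional and positive $(n-r)$-dimensional eigendistribution of $A$,'' and the de Rham decomposition you invoke has no parallel distributions to act on as you have set things up. (You also say nothing about the locus where $R=0$, where $A^{(p)}\geq0$ forces $A^{(p)}=0$, hence $A=0$ for $p>2$, and the eigenvalue analysis is vacuous.)

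The paper does not attempt this splitting from scratch: it quotes the equality-case rigidity in the proof of \cite[Proposition 2.1 (iii)]{GLW2005}, which goes back to Lichnerowicz \cite{Lich63} and Lafontaine \cite{Laf88}, to conclude that the exceptional manifolds are exactly the quotients of $\mathbb{H}^r\times\mathbb{S}^{n-r}$. To make your route self-contained you would need to extract parallel distributions from the parallel form itself (for instance its kernel distribution $\{X:\iota_X\omega=0\}$ and its orthogonal complement, or the holonomy reduction a nontrivial parallel $r$-form induces), apply de Rham on the universal cover, and then use the classical fact that a locally conformally flat Riemannian product with both factors of dimension at least two must be a product of space forms of opposite constant curvature; only then do $A^{(p)}\geq 0$ and $R>0$ somewhere pin down the factor dimensions and identify $\mathbb{H}^r\times\mathbb{S}^{n-r}$, after which one checks the deck group respects the splitting. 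You explicitly flag this as ``the hard part,'' and as written it is missing; either supply that argument or, as the paper does, cite the rigidity result directly.
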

\begin{proof} We focus on the cases $p\in (2, n)$. Recall that, if $\mathcal{R}\geq 0$ and $\mathcal{R}>0$ at some point on $r$-forms, then
$H^q(M^n, R)=\{0\}$ for $r\leq q \leq n-r$, which follows from the B\"{o}chner formula \eqref{Equ:bochner}.
In the light of \eqref{Equ:cone-comparison} in Lemma \ref{Lem:cone-comparison}, $\lambda(A)\in \mathcal{R}^{(r)}$ when
$\frac {n-p}2 +1 \leq r\leq \frac n2$ and $A^{(p)}\geq0$. It is also easily seen that $\lambda(A)$ is in the interior of $\mathcal{R}^{(r)}$ if
$\frac1{2(n-1)}R>0$, unless $\frac {n-p}2$ is an integer and the first $r=\frac {n-p}2 +1$ eigenvalues of the Schouten
tensor are the same in the light of \eqref{Equ:comparison-g-r} and \eqref{Equ:r-index}, and in these cases, one has to skip $\frac {n-p}2 +1$
and $\frac {n+p}2 -1$. We claim those exceptional cases one skips are very specific, and in fact, based on the rigidity result in the proof 
of \cite[Proposition 2.1 (iii)]{GLW2005} following \cite{Lich63} (see also \cite{Laf88}), 
they are simply the cases where $(M^n, g)$ is isometric to a quotient of $\mathbb{H}^r\times \mathbb{S}^{n-r}$. 
\end{proof}



\vskip 0.3cm
\noindent Huajie Liu: Department of Mathematics, Nankai University, Tianjin, China; \\e-mail: 
1120220031@nankai.edu.cn 
\vspace{0.2cm}

\noindent Shiguang Ma: Department of Mathematics, Nankai University, Tianjin, China; \\e-mail: 
msgdyx8741@nankai.edu.cn 
\vspace{0.2cm}

\noindent Jie Qing: Department of Mathematics, University of California, Santa Cruz, CA 95064; \\
e-mail: qing@ucsc.edu \\
Research of this author is partially supported by Simons Foundation.
\vspace{0.2cm}

\noindent Shuhui Zhong: School of Mathematics, Tianjin University, Tianjin, China; \\
e-mail: zhshuhui@tju.edu.cn


\end{document}